\newtheorem{theorem}{Theorem}[section]
\newtheorem{lemma}[theorem]{Lemma}
\newtheorem{prop}[theorem]{Proposition}
\theoremstyle{definition}
\newtheorem{example}[theorem]{Example}
\theoremstyle{remark}
\numberwithin{equation}{section}
\newcommand{\R}{\mathbb{R}}
\newcommand{\M}{\mathcal{M}}
\newcommand{\Q}{\mathbb{Q}}
\newcommand{\wt}{\widetilde}
\newcommand{\ra}{\rightarrow}
\renewcommand{\P}{\mathbb{P}}
\newcommand{\calX}{\mathcal{X}}
\newcommand{\calH}{\mathcal{H}}
\newcommand{\ov}{\overline}
\renewcommand{\o}{\mathcal{O}}
\newcommand{\calP}{\mathcal{P}}
\newcommand{\calC}{\mathcal{C}}
\begin{document}

\title[Effective divisors in the projectivized Hodge bundle]{Effective divisors in the projectivized Hodge bundle}

\author[Iulia Gheorghita]{Iulia Gheorghita}
\address{Department of Mathematics, Boston College, Chestnut Hill, MA 02467}
\email{gheorgiu@bc.edu}

\date{\today}

\begin{abstract} 
We compute the class of the closure of the locus of canonical divisors in the projectivization of the Hodge bundle $\P\ov{\calH}_g$ over $\ov{\M}_g$ which have a zero at a Weierstrass point. We also show that the strata of canonical and bicanonical divisors with a double zero span extremal rays of the respective pseudoeffective cones.

\end{abstract}

\maketitle

\section{Introduction}
The Hodge bundle $\calH_g$ over $\M_g$ parametrizes pairs $(C, \omega)$ where $C$ is a smooth genus $g$ curve and $\omega$ is a holomorphic abelian differential on $C$. If $\mu=(m_1, \dots, m_n)$ is a partition of $2g-2$, we denote by $\calH_g(\mu)$ the stratum consisting of $(C, \omega)$ where $\mu$ describes the multiplicities of the zeros of $\omega$. This describes a natural stratification on the complement of the zero section of $\calH_g$. We can projectivize $\calH_g$ to get a $(4g-4)$-dimensional space $\P\calH_g$ which parametrizes canonical divisors on smooth genus $g$ curves. The Hodge bundle also extends over the boundary of $\ov{\M}_g$, where the fiber over a nodal curve consists of stable differentials - that is, differentials that have at worst simple poles at the nodes with opposite residues on the two branches of a node. We denote the projectivization of this bundle by $\P\ov{\calH}_g$. We will denote by $\P\ov{\calH}_g(\mu)$ the closure of the strata in $\P\ov{\calH}_g$. 

A pair $(C, \omega)$ can also be realized as a plane polygon with sides identified by translation. The action of $\text{GL}_2^+(\R)$ on the plane induces an action on the strata, which is called Teichm\"uller dynamics. This provides one source of motivation for studying these objects (see \cite{zorich2006flat} and \cite{chen2017teichmuller}). 

On the other hand, effective divisors defined by geometric conditions have been widely studied since Harris and Mumford used them to determine the Kodaira dimension of $\M_g$ \cite{harris1982kodaira}. The class $W$ of the closure of the locus in $\ov{\M}_{g, 1}$ of curves with a marked Weierstrass point was first calculated in \cite{cukierman1989}. The class of the divisorial stratum $\P\ov{\calH}_g(2, 1^{2g-4})$ in $\text{Pic}(\P\ov{\calH}_g) \otimes \Q$ was computed in \cite{korotkin2011tau}. Recently, Mullane computed the classes of many effective divisors arising from the strata of differentials in order to study the effective cone of $\ov{\M}_{g,n}$ \cite{mullane2017effective}. In this paper we consider the analog of the Weierstrass divisor in the projectivization of the Hodge bundle: \[ D := \ov{\Big\{(C, \omega) \in \P\calH_g \; | \;  \text{div} \; \omega \; \text{contains a Weierstrass point} \Big\} }\subset \P\ov{\calH}_g\] and compute its class in $\text{Pic}(\P\ov{\calH}_g) \otimes \Q$.

\begin{theorem} In $\text{Pic}(\P\ov{\calH}_g) \otimes \Q$, \[ [D] = -(g-1)g(g+1)\eta + 2(3g^2+2g+1)\lambda - \frac{g(g+1)}{2}\delta_0 + \sum_{i=1}^{\lfloor g/2 \rfloor} (g+3)i(i-g)\delta_i\] where $\eta := \o_{\P\ov{\calH}_g}(-1)$ and $\lambda$, $\delta_0$, and $\delta_i$ denote the respective pullbacks from $\ov{\M}_g$.

\end{theorem}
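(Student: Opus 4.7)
\emph{Proof proposal.} Since $p:\P\ov{\calH}_g\to\ov{\M}_g$ is a projective bundle, $\mathrm{Pic}(\P\ov{\calH}_g)\otimes\Q$ is freely generated by $\eta$ together with the pullbacks of $\lambda$ and of the boundary classes $\delta_0,\ldots,\delta_{\lfloor g/2\rfloor}$. The plan is to write
\[ [D] = a\eta + b\lambda + \sum_{i=0}^{\lfloor g/2\rfloor} c_i\delta_i \]
and determine the coefficients one at a time, extracting $a$ from a test curve in a fiber of $p$ and the remaining coefficients from an explicit pushforward computation on the universal curve.

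The $\eta$-coefficient is read off a test curve contained in a single fiber of $p$. Fix a general smooth $C$ of genus $g$ and let $\ell\subset\P H^0(C,K_C)\cong\P^{g-1}$ be a general pencil of canonical divisors. Then $\ell\cdot\eta=-1$ and $\ell\cdot\lambda=\ell\cdot\delta_i=0$, while $\ell\cdot D$ equals the total Weierstrass weight $g^3-g=(g-1)g(g+1)$ on $C$ (one canonical divisor in the pencil passes through each simple Weierstrass point). This forces $a=-(g-1)g(g+1)$.

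For the remaining coefficients I would realize $D$ as a pushforward from the universal curve $q:\calC_\P\to\P\ov{\calH}_g$. The inclusion $\eta\hookrightarrow p^*\ov{\calH}_g=q_*\omega_q$ composed with the evaluation $q^*q_*\omega_q\to\omega_q$ defines a tautological section of $\omega_q\otimes q^*\eta^{-1}$ whose vanishing locus is the universal canonical divisor $Z\subset\calC_\P$, so $[Z]=\omega_q-q^*\eta$. Over the smooth locus of $q$, the Weierstrass divisor $W'\subset\calC_\P$ is the degeneracy locus of the jet evaluation $q^*p^*\ov{\calH}_g\to J^{g-1}(\omega_q)$, and the standard filtration of the jet bundle gives
\[ [W'] = \tbinom{g+1}{2}\omega_q - q^*p^*\lambda + (\text{boundary correction}). \]
Setting $[D]=q_*([Z]\cdot[W'])$, expanding, and applying Mumford's relation $q_*(\omega_q^2)=p^*(12\lambda-\sum_i\delta_i)$ together with $q_*(\omega_q\cdot q^*\alpha)=(2g-2)\alpha$ and $q_*(q^*\alpha\cdot q^*\beta)=0$, the smooth part of $[W']$ already contributes
\[ -(g-1)g(g+1)\eta + 2(3g^2+2g+1)\lambda - \tfrac{g(g+1)}{2}\sum_{i=0}^{\lfloor g/2\rfloor}\delta_i, \]
reproducing the asserted coefficients of $\eta$, $\lambda$, and $\delta_0$ exactly.

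The only remaining ingredient, and the main obstacle, is the boundary correction to $[W']$, which is supported over $\delta_i\subset\ov{\M}_g$ for $i\ge 1$ and accounts for the discrepancy between $-\tfrac{g(g+1)}{2}$ and the claimed $(g+3)i(i-g)$ in each such $\delta_i$-coefficient. I would extract it from Cukierman's formula for the Weierstrass divisor on $\ov{\M}_{g,1}$, pulled back to $\calC_\P$ along $\calC_\P\to\calC=\ov{\M}_{g,1}$, taking care to separate the two components $\delta_{i:1}$ and $\delta_{g-i:1}$ of the preimage of $\delta_i$ and to track their intersections with $[Z]$. As a cross-check — and perhaps as a cleaner route for each individual coefficient — one can construct for each $1\le i\le\lfloor g/2\rfloor$ a one-parameter test family $B_i\subset\P\ov{\calH}_g$ of stable differentials on curves of type $\delta_i$ (for instance by varying the node of attachment on a genus $i$ component with the differentials on the two components held generic), compute $B_i\cdot D$ directly by counting limits of Weierstrass points in the family, and solve for $c_i$ using the standard intersections of $B_i$ with $\eta,\lambda,\delta_0,\delta_i$. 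This last step is where the geometry of how Weierstrass points specialize onto reducible stable curves enters most delicately.
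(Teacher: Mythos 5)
Your computation of the $\eta$-coefficient is exactly the paper's test curve $A$ and is correct. Your route to the remaining coefficients, however, is genuinely different from the paper's and, as written, contains a real gap. The paper determines $b$ not by a global pushforward but by restricting $D$ to the open stratum $\P\calH_g(1^{2g-2})$, using $\varphi^*\psi_i=\eta/2$ and the relation $\lambda=\frac{g-1}{4}\eta$ on that stratum, and then combining with the known class of $\P\ov{\calH}_g(2,1^{2g-4})$; it then gets $c_0$ and $c_i$ from explicit test curves inside $\delta_1$ and $\delta_i$. Your pushforward $q_*\bigl([Z]\cdot[W']\bigr)$ with $[Z]=\omega_q-q^*\eta$ and the smooth part of Cukierman's class does reproduce the stated $\eta$, $\lambda$, and $\delta_0$ coefficients numerically (I checked: $\tbinom{g+1}{2}(12\lambda-\delta)-(2g-2)\lambda-\tbinom{g+1}{2}(2g-2)\eta$ gives exactly $2(3g^2+2g+1)\lambda-\frac{g(g+1)}{2}\delta-(g-1)g(g+1)\eta$), which is an appealing cross-check. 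But the identity $[D]=q_*([Z]\cdot[W'])$ is not automatic: the preimage in the universal curve of the closed Weierstrass locus can contain excess divisorial components supported over the boundary (this is precisely the divisor $E$ with $\varphi^*W_{2g-2}=D_{2g-2}+E$ in the paper), and the class $\omega_q-q^*\eta$ of the vanishing locus of the tautological section can include components of fibers where the stable differential vanishes identically. You would need to show these do not contribute over the generic point of $\delta_0$ before the $\delta_0$-coefficient is actually established.

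The more serious issue is that the $\delta_i$-coefficients for $i\ge 1$ --- the discrepancy between $-\frac{g(g+1)}{2}$ and $(g+3)i(i-g)$ --- are exactly where all the difficulty of the theorem lives, and your proposal explicitly defers them. Identifying ``the boundary correction to $[W']$'' is not a matter of transcribing Cukierman's $\delta_{i:1}$-terms: one must decide, for each boundary stratum of the incidence variety compactification, whether it is an excess component of the pullback of the Weierstrass divisor, and this requires the theory of limit linear series / admissible covers (the paper's Lemma~3.1 shows $E_i(m)\not\subset E$ for $m\le i-2$ via a base-point-freeness and Hurwitz-scheme dimension count, and the claims in test curves $B$ and $C$ rule out the two- and three-nodal loci by exhibiting differentials whose zeros avoid all limit Weierstrass points). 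Likewise, your proposed test families $B_i$ cannot be intersected with $D$ until one knows which limits of Weierstrass points are genuine; the count $B_2\cdot W=i(g-i)(i+2)$ and the contributions $(2i-2)B_1\cdot W+(2g-2i-2)B_2\cdot W+2B_3\cdot W$ in the paper are the content you would still have to supply. So the proposal proves the $\eta$-coefficient, gives a plausible but unjustified derivation of the $\lambda$- and $\delta_0$-coefficients, and leaves the $\delta_i$-coefficients for $i\ge1$ unproved.
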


In order to prove Theorem 1.1, we make use of the incidence variety compactification of the strata $\P\calH_g(\mu)$ provided in \cite{BCGGM}. We denote the incidence variety compactification of the strata by $\ov{\calP}(\mu)$. In particular, we will often pull back test curves to $\ov{\calP}(1^{2g-2})$, which allows us to separate the zeros of a differential and also make use of the class of the standard Weierstrass divisor $W \subset \ov{\M}_{g,1}$.

In this paper we also investigate the extremality of divisors in the pseudoeffective cone. We denote by $\ov{\mathcal{Q}}_g$ over $\ov{\M}_g$ the bundle of quadratic differentials, i.e., $\pi_*(\omega_{\pi}^{\otimes 2})$ where $\pi: \ov{\calC}_g \ra \ov{\M}_g$ is the universal curve and $\omega_\pi$ the relative dualizing sheaf. If $\mu = (d_1, \dots, d_n)$ is now a partition of $4g-4$, we denote by $\ov{\mathcal{Q}}_g(\mu)$ the stratum parametrizing quadratic differentials where $\mu$ describes the multiplicities of the zeros. Using results from \cite{nonvaryinglyapsum} and \cite{quadlyapsum}, in the second part of the paper we prove the following theorem. 

\begin{theorem} The divisors $\P\ov{\calH}_g(2, 1^{2g-4}) \subset \P\ov{\calH}_g$ and $\P\ov{\mathcal{Q}}_g(2, 1^{4g-6}) \subset \P\ov{\mathcal{Q}}_g$ span extremal rays of the respective pseudoeffective cones.
\end{theorem}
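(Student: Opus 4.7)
The plan is to apply a standard moving-curve criterion for extremality of an effective divisor: an irreducible effective divisor $D$ on a normal projective variety $X$ spans an extremal ray of $\ov{\mathrm{Eff}}^1(X)$ provided there is a family of irreducible curves $\{B_t\}$ covering a Zariski-dense subset of $D$ and satisfying $B_t \cdot D < 0$.

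For $D_1 := \P\ov{\calH}_g(2, 1^{2g-4}) \subset \P\ov{\calH}_g$, I would first record the class $[D_1]$ in $\mathrm{Pic}(\P\ov{\calH}_g) \otimes \Q$ from \cite{korotkin2011tau}, written as a combination of $\eta$, $\lambda$, and the boundary classes $\delta_i$. Next, I would construct a covering family of curves contained in $D_1$ and compute the corresponding intersection number. The crucial input is the non-varying Lyapunov sum identity of \cite{nonvaryinglyapsum}: via the Eskin-Kontsevich-Zorich formula, it produces a universal linear relation among the $\lambda$-, $\eta$-, and $\delta$-degrees of curves that lie in the stratum. Combining this relation with the Korotkin-Zograf class and the intersection numbers of the covering family yields $B \cdot D_1 < 0$.

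For $D_2 := \P\ov{\mathcal{Q}}_g(2, 1^{4g-6}) \subset \P\ov{\mathcal{Q}}_g$, the strategy is parallel. The class $[D_2]$ in $\mathrm{Pic}(\P\ov{\mathcal{Q}}_g) \otimes \Q$ is obtained from the quadratic version of the EKZ formula, with the non-varying Lyapunov sum identity of \cite{quadlyapsum} filling in the missing ingredient. An analogous covering family of curves in the quadratic stratum is then constructed (for instance, by pencils of quadratic differentials with a double zero at a fixed general point of a general curve), and the intersection with $D_2$ is shown to be negative by the same mechanism.

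The main obstacle is engineering the covering family and verifying $B_t \cdot D < 0$ from first principles. A naive choice of covering curves --- pencils in a single fiber of $\P\ov{\calH}_g \to \ov{\M}_g$ --- produces $\P^1$'s lying in $D$ but with \emph{positive} intersection with $D$, because of the sign convention $\eta = \o(-1)$ (the restriction of $D$ to a fiber is an effective multiple of $\o(1)$, so the $\eta$-coefficient of $[D]$ is negative, and a line in a fiber has $B \cdot \eta = -1$). The non-varying Lyapunov sum identity is what ultimately resolves this: it supplies the precise relation between intersection numbers on curves inside the stratum that, once combined with the Korotkin-Zograf class for $D_1$ (resp.\ the quadratic EKZ class for $D_2$), forces the intersection to be negative on a suitable covering family that also moves in the base $\ov{\M}_g$. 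Establishing this sign rigorously, by tracking the area Siegel-Veech contribution through the EKZ formula and comparing with the Korotkin-Zograf (or quadratic) class, is the key technical step.
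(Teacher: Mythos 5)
Your overall direction matches the paper's---use the Korotkin--Zograf class of $\P\ov{\calH}_g(2,1^{2g-4})$, intersect with curves dense in the stratum, and feed in the Lyapunov-exponent intersection formulas---but there are two genuine gaps. The first is the extremality criterion itself. The curves on which the Lyapunov/Siegel--Veech machinery applies are Teichm\"uller curves, and these are \emph{rigid}: they form a countable Zariski-dense collection in the stratum, not an irreducible algebraic family, so the ``moving curve with $B\cdot D<0$'' lemma you invoke does not apply. (Your alternative suggestion of an honest covering family, e.g.\ pencils of differentials with a double zero through a fixed point, either lives in a fiber of the projection to $\ov{\M}_g$---where, as you note yourself, the intersection with $D$ is \emph{positive}---or is not a family to which the EKZ-type formulas apply.) The paper instead uses the Chen--Coskun criterion \cite[Proposition 4.1]{chen2013strata}: for a big divisor $A$ one must exhibit a single $d>0$ with $C\cdot(D+dA)\le 0$ \emph{uniformly} over the dense set of curves. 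Verifying that uniformity is a real step: it requires that the admissible $d$ for each Teichm\"uller curve be bounded away from $0$, which the paper gets from the uniform bound $L\le g$ on the sum of Lyapunov exponents (resp.\ the boundedness of $c_{\mathrm{area}}$ in the quadratic case). Your proposal never addresses this, and without it the argument does not close.

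The second gap is that the sign computation, which you defer as ``the key technical step,'' is where the content lies, and your framing of the input is slightly off: what is needed is not the \emph{non-varying} property of the Lyapunov sum but the general Chen--M\"oller relations $C\cdot\lambda=\tfrac{\chi}{2}L$, $C\cdot\delta_0=\tfrac{\chi}{2}(12L-12\kappa_\mu)$, $C\cdot\delta_i=0$ for $i>0$, valid for every Teichm\"uller curve in the stratum. Plugging these into $24\lambda-(6g-6)\eta-2\delta_0-3\sum_{i\ge 1}\delta_i$ one finds that the $L$-dependence cancels identically and
\[
C\cdot \P\ov{\calH}_g(2,1^{2g-4})=\chi\bigl(12\kappa_\mu-3g+3\bigr)=-\tfrac{\chi}{3}<0,
\]
with $\kappa_\mu=\tfrac{9g-10}{36}$ for $\mu=(2,1^{2g-4})$; the quadratic case gives $-\tfrac{\chi}{2}$ analogously. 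This cancellation is exactly why the negativity holds for \emph{all} Teichm\"uller curves simultaneously, and it is what makes the uniform-$d$ verification tractable. As written, your proposal identifies the right ingredients but leaves both the criterion and the decisive computation unestablished.
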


We prove this by observing that Teichm\"uller curves, which are dense in the strata, have negative intersection with the classes of these divisors. Much work has been done to determine the extremality of $W \subset \ov{\M}_{g,1}$ for small $g$ (\cite{rulla2001birational}, \cite{chen2013strata}, \cite{jensen2013birational}, \cite{jensen2012rational}), yet the question remains open for $g \geq 6$. We hope that our computation of the class of the related divisor $D$ will contribute to this discussion.

\subsection*{Acknowledgements} I would like to thank my advisor Dawei Chen for introducing me to these ideas and for many helpful discussions. During the preparation of this work I was partially supported by NSF CAREER grant DMS-1350396.

\section{Preliminaries}
Recall that \[ \text{Pic}(\P\ov{\calH}_g) \otimes \Q = \langle \eta, \lambda, \delta_0, \dots, \delta_{\lfloor g/2 \rfloor}\rangle\] where $\eta := \o_{\P\ov{\calH}_g}(-1)$ and the remaining classes are the pullbacks from $\ov{\M}_g$. Let $\pi: \ov{\calC}_g \ra \ov{\M}_g$ be the universal curve and $\omega_\pi$ the relative dualizing sheaf. We may also replace $\lambda$ in the above expression with the pullback of $\kappa$ from $\ov{\M}_g$, where $\kappa = \pi_*c_1^2(\omega_\pi)$. This is because $\kappa = 12\lambda - \delta$, where $\delta$ is the total boundary class. 

Let $S \subseteq \{1, \dots, n\}$. Recall also that \[ \text{Pic}(\ov{\M}_{g,n}) \otimes \Q = \langle \lambda, \psi_1, \dots, \psi_n, \delta_0, \delta_{i; S}\rangle\] where $0 \leq i \leq \lfloor g/2 \rfloor$ and $\delta_{i; S}$ denotes the class of the divisor whose general points parameterize one-nodal curves whose genus $i$ component contains the markings labelled by the subset $S$. Note that if $i = 0$, then $|S| \geq 2$. For more details, see \cite{arbarello1987picard}.

Computing the degree of $\eta$ on test curves is made easier by a relation which we will now describe. The reader can see \cite{chen2018positivity} for more details. Let $\pi: \calX \ra B$ be a one-parameter family of pointed stable differentials in $\ov{\calP}(\mu)$ whose generic fiber is smooth. If $\calX$ is singular we replace it by its minimal resolution. Let $S_1, \dots, S_n$ be the distinct sections which mark the zeros and poles of the differentials parameterized by this family and let $\omega$ be the relative dualizing line bundle class of $\pi$. Moreover, let $V$ be the union of the irreducible components where the parameterized differentials are identically zero. Then, since $\pi^*\eta$ has zeros or poles along the $S_i$ with multiplicity $m_i$ and zeros along $V$, we have a relation of divisor classes in $\calX$ \[ \pi^*\eta = \omega - \sum_{i=1}^n m_iS_i - V.\] 

We will also regularly make use of the incidence variety compactification of the strata. For a partition $\mu=(m_1, \dots, m_n)$ of $2g-2$, define \[ \calP(\mu) := \Big\{ (X, \omega, z_1, \dots, z_n) \in \P\calH_{g,n} \; | \; \text{div} \;\omega = \sum_{i=1}^n m_iz_i \Big\}\] where $\P\calH_{g,n}$ denotes the projectivized Hodge bundle over $\M_{g,n}$ parametrizing pointed stable differentials (with ordered marked points). The incidence variety compactification $\ov{\calP}(\mu)$ is defined to be the closure of $\calP(\mu)$ inside $\P\ov{\calH}_{g,n}$. We refer the reader to \cite{BCGGM} for more details. Note that we will use the notation $\ov{\calP}(\mu)$ for the incidence variety compactification of a stratum, whereas in \cite{BCGGM} it is denoted $\P\Omega\ov{\M}^{\text{inc}}_{g, n}(\mu)$.

\section{The class of $D$}

In this section we will prove Theorem 1.1. Recall that \[ D := \ov{\Big\{(C, \omega) \in \P\calH_g \; | \;  \text{div} \; \omega \; \text{contains a Weierstrass point} \Big\} }\subset \P\ov{\calH}_g.\] We will show that in $\text{Pic}(\P\ov{\calH}_g) \otimes \Q$, \[ [D] = -(g-1)g(g+1)\eta + 2(3g^2+2g+1)\lambda - \frac{g(g+1)}{2}\delta_0 + \sum_{i=1}^{\lfloor g/2 \rfloor} (g+3)i(i-g)\delta_i.\]

From now on we will use the notation $D$ to refer to the class of $D$ in $\text{Pic}(\P\ov{\calH}_g) \otimes \Q$. Before beginning the proof we introduce some notation and prove a lemma. Note that we have the following morphisms between the various moduli spaces 

\[\begin{tikzcd} \ov{\calP}(1^{2g-2}) \arrow{r}{\varphi} \arrow{d}{\pi}
						& \ov{\M}_{g, 2g-2} \arrow[d, "f_i"] \\
			\P\ov{\calH}_g
						& \ov{\M}_{g, 1}
\end{tikzcd}\]

\noindent where $\pi$ forgets the marked points, $\varphi$ forgets the differential, and $f_i$ forgets all but the $i$th marked point. We denote by $W \subset \ov{\M}_{g,1}$ the standard Weierstrass divisor. We also consider the divisiors \begin{align*} & D_{2g-2} := \ov{\Big\{(X, \omega, z_1, \dots, z_{2g-2}) \in \calP(1^{2g-2}) \;  | \;  \text{some} \; z_i \; \text{is a Weierstrass point} \Big\}} \subset \ov{\calP}(1^{2g-2}) \\
& W_{n} := \ov{\Big\{ (C, z_1, \dots, z_{n}) \in \M_{g, n} \;  |  \; \text{some} \; z_i \; \text{is a Weierstrass point}\Big\}} \subset \ov{\M}_{g,n}. \end{align*}

Note that $D_{2g-2}$ is the proper transform of $D$ under the morphism $\pi$ and that $\pi_*D_{2g-2} = (2g-2)! D$. Similarly, $\varphi^*W_{2g-2} = D_{2g-2} + E$ for some collection $E$ of irreducible boundary divisors of $\ov{\calP}(1^{2g-2})$ with image in $W_{2g-2}$. Over the locus of smooth curves the pullback of $W_{2g-2}$ is simply $D_{2g-2}$, so $E$ must contain just pointed stable differentials with nodal underlying curves. We will see that $E$ is indeed nonempty.

Let $m \geq 0$. We will denote by $E_i(m)$ the closure of the locus in $\ov{\calP}(1^{2g-2})$ of one-nodal curves with a component $C_1$ of genus $i$, attached to a component $C_2$ of genus $g-i$ at a point $q_1 \sim q_2$, having twisted differentials $\eta_1$ and $\eta_2$ of types $(1^{2i-m-2}, m)$ and $(1^{2g-2i+m}, -(m+2))$ respectively, where $\eta_1$ has its order $m$ zero at $q_1$ and $\eta_2$ has its pole at $q_2$ (see Figure 1). This locus is indeed codimension one in $\ov{\calP}(1^{2g-2})$. Note that when $i \geq 2$ imposing the condition that $m \geq i$ will ensure that $q_1$ is a Weierstrass point of $C_1$ and so by \cite[Theorem 5.45]{harris1998moduli} that every point of $C_2$ is a limit Weierstrass point when smooth genus $g$ curves degenerate to $C_1 \cup_q C_2$. Thus, in this case $E_i(m) \subset E$.  

\begin{figure}[h!]
	\centering
	\def \svgscale{.28}
	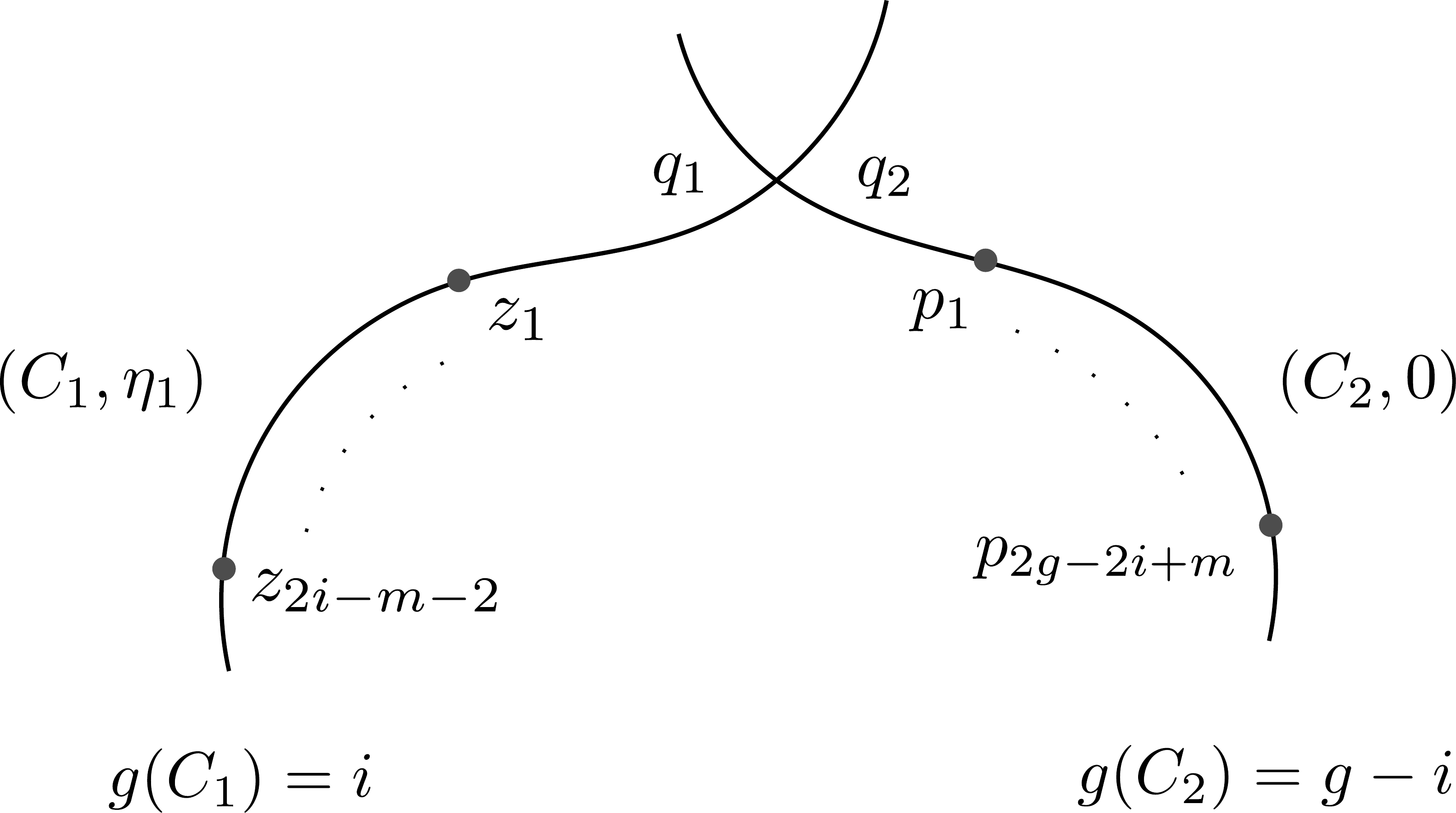
	\caption{Pointed stable differentials in $E_i(m)$ have the form illustrated above.}
\end{figure}

\begin{lemma} Let $E_i(m)$ be the locus in $\ov{\calP}(1^{2g-2})$ described above. When $m \leq i-2$, $i \geq 2$ and when $m=0$, $i=1$, the irreducible divisor $E_i(m)$ is not in $E$.
\end{lemma}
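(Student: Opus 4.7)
My plan is to verify $E_i(m) \not\subset E$ by exhibiting that the generic pointed stable curve underlying an element of $E_i(m)$ is not in $W_{2g-2}$; equivalently, that no marked simple zero of the generic twisted differential is a limit of smooth Weierstrass points in a one-parameter smoothing of $C_1 \cup_q C_2$.

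A generic point of $E_i(m)$ is specified by generic $(C_1, q_1) \in \M_{i,1}$ and $(C_2, q_2) \in \M_{g-i,1}$, together with generic $\eta_1 \in H^0(K_{C_1}(-m q_1))$ and $\eta_2 \in H^0(K_{C_2}((m+2) q_2))$ (the latter with vanishing residue at $q_2$); by Riemann-Roch the projectivized linear systems have dimensions $i-m-1$ and $g-i+m-1$ respectively. Under the hypothesis $m \leq i-2$, $i \geq 2$, the first dimension is at least $1$, so the simple zeros of $\eta_1$ on $C_1$ sweep out a positive-dimensional family; moreover, $q_1$ is not a Weierstrass point of the generic $C_1$, since a differential vanishing to order $m<i$ at $q_1$ does not force $h^0(K_{C_1}(-i q_1)) \geq 1$.

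Next I would invoke the Eisenbud-Harris theory of limit Weierstrass points on compact-type curves, as recorded in \cite[Theorem 5.45]{harris1998moduli}. For a generic compact-type curve $C_1 \cup_q C_2$ in which $q$ is not a Weierstrass point of either component, the limit of the smooth Weierstrass divisor in a one-parameter smoothing is supported, away from the node, precisely on the classical Weierstrass points of $C_1$ on $C_1 \setminus\{q\}$ and of $C_2$ on $C_2 \setminus\{q\}$; any remaining Weierstrass weight concentrates at the node itself. Since the marked zeros $z_j$ lie on the smooth locus of $C_1 \cup_q C_2$, they are limit Weierstrass points only if they coincide with the classical Weierstrass points of one of the two components.

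The final step is a dimension count. With $(C_1, q_1)$ fixed, the locus of $\eta_1 \in \P H^0(K_{C_1}(-m q_1))$ whose divisor of simple zeros meets the (finite) classical Weierstrass locus of $C_1$ is a proper closed subset, so a generic $\eta_1$ has all simple zeros off that locus. The analogous argument handles $\eta_2$ on $C_2$ when $g-i+m \geq 2$; the remaining boundary situation falls under the case $m=0, i=1$, which is direct, because $C_1$ is then an elliptic tail carrying no classical Weierstrass points, and so I need only verify that the simple zeros of $\eta_2$ on $C_2$ generically miss its Weierstrass locus, automatic once the corresponding linear system is positive-dimensional (or vacuous when $C_2$ has genus $1$). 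Combining these observations yields the lemma. The main subtlety is the second step: rigorously confirming that the limit Weierstrass divisor on $C_1 \cup_q C_2$ concentrates away from the node only at the classical Weierstrass points of each component requires analyzing the limit $g^{g-1}_{2g-2}$ and checking that no additional ramification of the limit canonical series spreads onto $C_1$ or $C_2$ under our non-degeneracy hypotheses.
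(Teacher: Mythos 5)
Your overall strategy---produce a pointed stable differential in $E_i(m)$ whose marked zeros avoid the finite set of limit Weierstrass points---is the same as the paper's, but two of your steps do not hold up. First, your identification of the limit Weierstrass divisor is incorrect: away from the node, the limit Weierstrass points of $C_1\cup_q C_2$ are \emph{not} the classical Weierstrass points of the components, but the ramification points of the aspects $|K_{C_1}(2(g-i)q_1)|$ and $|K_{C_2}(2iq_2)|$ of the limit canonical series. The elliptic-tail case makes the difference vivid: a genus $1$ component has no classical Weierstrass points yet carries $g^2-1$ limit Weierstrass points (the points $p\neq q_1$ with $gp\sim gq_1$), a fact the paper relies on in the test curve $B$ computation. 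Your argument survives only because all that is actually needed is that this set is \emph{finite} once the node is not a Weierstrass point of either component (which is what \cite[Theorem 5.45]{harris1998moduli} gives), and because in the case $m=0$, $i=1$ there are simply no marked zeros on the genus $1$ component; but as written your second step asserts something false, and your reason for dismissing the elliptic tail is the wrong one.

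Second, and more seriously, your concluding dimension count assumes without proof that the locus of $\eta_1\in\P H^0(K_{C_1}(-mq_1))$ vanishing at a prescribed point $w$ is a \emph{proper} closed subset. This fails exactly when $w$ is a base point of $|K_{C_1}(-mq_1)|$, in which case every differential in the system vanishes there and no generic choice helps. Ruling this out is the main technical content of the paper's proof: a base point $r$ would force $h^0(mq_1+r)\geq 2$, hence a degree $m+1$ cover of $\P^1$ with a fiber of type $mq_1+r$, and a Hurwitz-scheme dimension count shows this cannot happen for a general $(C_1,q_1)$; only then is $\P H^0(K_{C_1}(-mq_1-w))$ an honest hyperplane to be avoided. (One also needs $q_1$ not a Weierstrass point of $C_1$ for the generic point of the stratum $\calP(m,1^{2i-m-2})$ --- the paper invokes \cite[Theorem 6.7]{chen2017degenerations} here --- so that the limit Weierstrass locus on $C_2$ is finite to begin with; your parenthetical justification of this is not a proof.) Without the base-point-freeness argument the lemma is not established.
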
 

\begin{proof} Assume $m \leq i-2$. It suffices to find a pointed stable differential in $E_i(m)$ whose zeros avoid the limit Weierstrass points. First, note that a general $(C_2, q_2, p_1, \dots, p_{2g-2i+m}) \in \varphi(\calP(-(m+2), 1^{2g-2i+m}))$ has $q_2$ not a Weierstrass point of $C_2$; indeed, we may choose any point on $C_2$ to be the pole. This condition ensures that not every point on $C_1$ is a limit Weierstrass point \cite[Theorem 5.45]{harris1998moduli}. Similarly, by \cite[Theorem 6.7]{chen2017degenerations} a general $(C_1, q_1, z_1, \dots, z_{2i-m-2}) \in \varphi(\calP(m, 1^{2i-m-2}))$ has $q_1$ not a Weierstrass point of $C_1$. Again, this condition ensures that not every point of $C_2$ is a limit Weierstrass point. 

By the description of limit Weierstrass points in \cite[Theorem 5.45]{harris1998moduli}, we know that these points form a finite set on both $C_1$ and $C_2$. Denote by $w$ a limit Weierstrass point in $C_2$. Then, we may choose a general differential on $C_2$ away from the finite set of hyperplanes of the form \[\P H^0(K_{C_2}((m+2)q_2 - w))  \subset \P H^0(K_{C_2}((m+2)q_2)) \cong \P^{g-i+m}.\] 

By Riemann-Roch and the condition that $m \leq i -2$ we have that $h^0(K_{C_1}(-mq_1)) \geq 2$. We will show for a general $C_1$ and $q_1$, that this linear series is base point free. If $r$ were a base point, then $h^0(mq_1+r) = h^0(mq_1) + 1$. This implies that $h^0(mq_1 + r) \geq 2$. Thus we can form an $m+1$ degree cover $C_1 \ra \P^1$ with a fiber of type $mq_1 + r$. The dimension of the Hurwitz scheme parametrizing genus $i$ curves along with degree $m+1$ covers of $\P^1$ with this specified ramification type is bounded by $3g-3$. As this is smaller than $\dim \M_{g, 1}$, we have that for general $(C_1, q_1)$, $h^0(mq_1 + r) = 1$ and hence that $|K_{C_1}(-mq_1)|$ has no base points. This means that for such an appropriate general choice of $(C_1, q_1)$, $\P H^0(K_{C_1}(-mq_1 - r))$ is indeed a hyperplane of $\P H^0(K_{C_1}(-mq_1))$. We may now choose a general differential on $C_1$ such that it avoids a finite set of hyperplanes of the form $\P H^0(K_{C_1}(-mq_1 - w))$ for $w$ a limit Weierstrass point of $C_1$. 

Now assume that $m=0$ and $i=1$. In this case, we simply need to choose some $(C_2, q_2, z_1, \dots, z_{2g-2}) \in \varphi(\calP(-2, 1^{2g-2}))$ such that the $z_i$ avoid the limit Weierstrass points of the curve. We do this precisely as above. 

\end{proof}

\begin{proof}[Proof of Theorem 1.1]
First, note that in genus 2, the divisor $D$ is simply $\P\ov{\calH}_2(2)$. When $g=2$ we can use the relation $\lambda = \frac{1}{10}\delta_0 + \frac{1}{5}\delta_1$ to see that the class of $\P\ov{\calH}(2)$ computed in \cite{korotkin2011tau} agrees with the class of $D$ given in Theorem 1.1. From now on assume $g \geq 3$. Let $\eta := \o_{\P\ov{\calH}_g}(-1)$ and write \[ D = a\eta + b\lambda + \sum_{i=0}^{\lfloor g/2 \rfloor} c_i\delta_i.\] \\

\noindent \textit{Test curve $A$.} \\

\indent Let $C$ be a general genus $g$ curve canonically embedded in $\P^{g-1}$. Let $\Lambda \cong \P^{g-3}$ be a fixed general subspace and consider the one-dimensional family of hyperplanes containing $\Lambda$. These hyperplanes are parametrized by $A \cong \P^1$ and cut out canonical divisors on $C$. As there are $(g-1)g(g+1)$ Weierstrass points on a general genus $g$ curve, we have $A \cdot D = (g-1)g(g+1)$. Moreover, since $A \subset \P{\ov{\calH}}_g$ is simply a line in the fiber above the point $C \in \M_g$, $A \cdot \eta = -1$. Clearly, $A \cdot \delta_i = 0$ for all $i \geq 0$ and by the projection formula $A \cdot \lambda = 0$.  This implies that \[a = -(g-1)g(g+1).\]

\indent We will now find the coefficient $b$. The strategy is to write $D$ as follows: \[ D = a'\eta + b'[\P\ov{\calH}_g(2, 1^{2g-4})] + \sum_{i=0}^{\lfloor g/2 \rfloor} c'_i\delta_i\] and to then find the class of $D$ in the stratum $\P\calH_g(1^{2g-2})$ above just the smooth locus $\M_g$. This will give us the coefficient $a'$ above. Then using the class \[ \P\ov{\calH}_g(2, 1^{2g-4}) = 24\lambda - (6g-6)\eta - 2\delta_0 - 3\sum_{i=1}^{\lfloor g/2 \rfloor} \delta_i\] \cite{korotkin2011tau} we extract the coefficient of $b'$ using our knowledge of $a$. \\

\indent Consider again the following morphisms: 
\[\begin{tikzcd} \calP(1^{2g-2}) \arrow[hookrightarrow]{r}{\varphi} \arrow{d}{\pi}
						& \M_{g, 2g-2} \arrow[d, "f_i"] \\
			\P\calH_g
						& \M_{g, 1}
\end{tikzcd}\]

\noindent Over the locus of smooth curves, we have that \[ W_{2g-2} = \sum_{i=1}^{2g-2}f_i^*W = \sum_{i=1}^{2g-2}\Big(\frac{g(g+1)}{2}\psi_i - \lambda\Big)\] where in the second equality we use the class of $W$ found in \cite{cukierman1989}. Note that $\varphi^*\psi_i = \frac{\eta}{2}$. We can see this by intersecting the relation \[ \omega = f^*\eta + \sum_{i=1}^{2g-2} S_i\] with $S_i$ and applying $f_*$, where $f: \calX \ra B$ is a family of pointed stable differentials (with underlying smooth curves and disjoint sections $S_i$). Since $\omega \cdot S_i = -S_i^2$, $\deg_{f}\psi_i = f_*(\omega \cdot S_i)$, and $\deg_{f}\eta = f_*(f^*\eta \cdot S_i)$ this gives the relation. So, \[ \varphi^*W_{2g-2} = \frac{(g-1)g(g+1)}{2}\eta - (2g-2)\lambda.\] Since $D|_{\P\calH(1^{2g-2})} = \pi_*\varphi^*W_{2g-2}/(2g-2)!$ and $\lambda = \frac{g-1}{4}\eta$ on $\P\calH_g(1^{2g-2})$ (see \cite[Section 3.4]{eskin2014sum}), \[ D|_{\P\calH_g(1^{2g-2})} = \frac{(g^2+1)(g-1)}{2}\eta.\] Thus, \[ a' =  \frac{(g^2+1)(g-1)}{2}. \] Using that $a = -(g-1)g(g+1)$, we find that \[ b' = \frac{3g^2+2g+1}{12}\] and therefore that \[ b = 2(3g^2+2g+1).\]  \\ 

\noindent \textit{Test curve $B$.} \\

\indent Consider the Hodge bundle $\ov{\calH}_1$ over a general pencil of plane cubics parametrized by $B \cong \P^1$. Since $\lambda$ has degree 1 on this family of curves, a section $s: B \ra \ov{\calH}_1$ will assign to exactly one cubic in the family the zero differential. Now attach this pencil of plane cubics with the corresponding differential given by $s$ to a general genus $g-1$ curve $C_2$ with a fixed general nonzero differential $\omega_2$ (see Figure 2). We call the node $q_1$ on the genus 1 curves and $q_2$ on $C_2$. 

\begin{figure}[h!]
	\centering
	\def \svgscale{.27}
	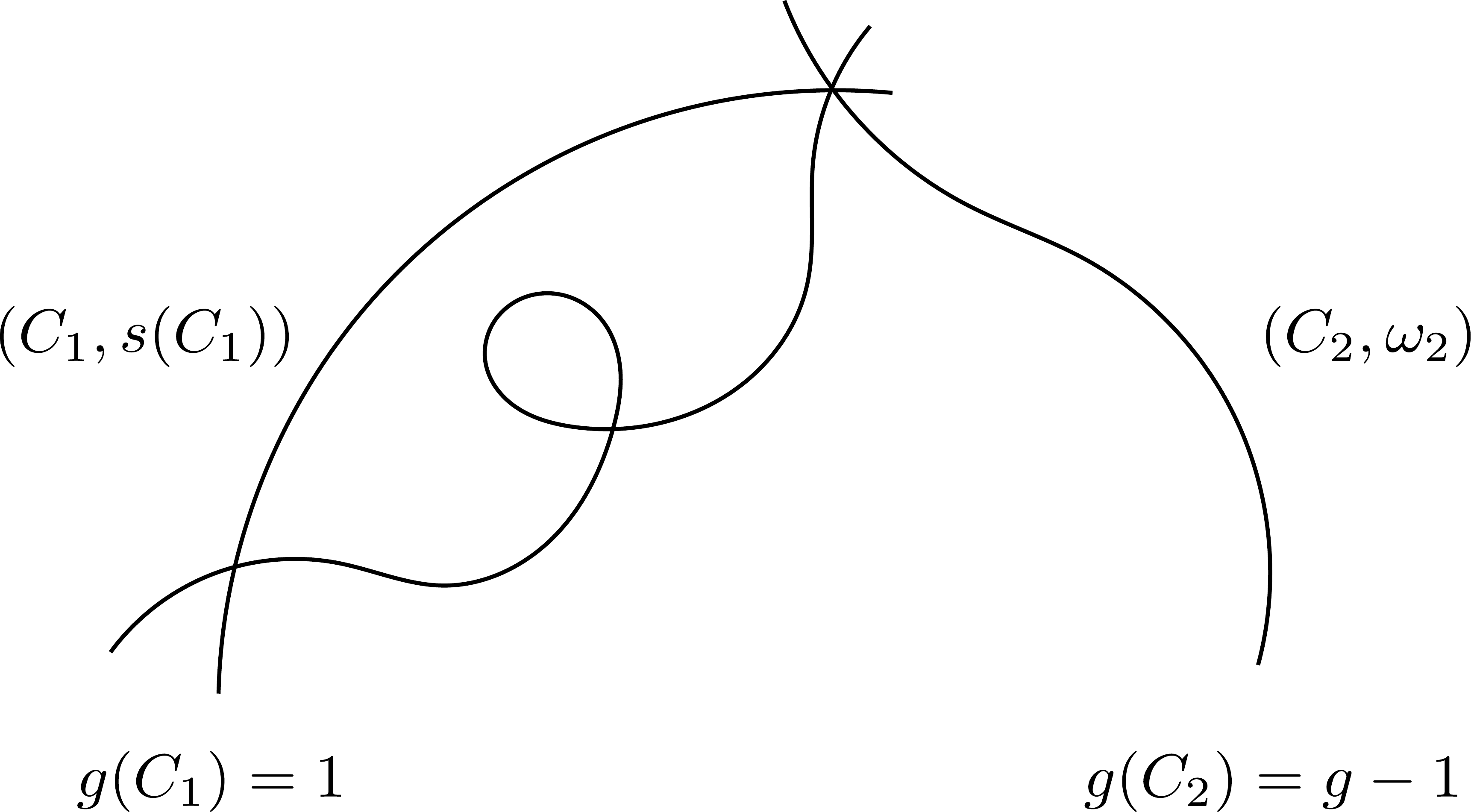
	\caption{Test curve $B$.}
\end{figure}

By the projection formula and standard results from \cite[Chapter 3]{harris1998moduli} we immediately get that $B \cdot \lambda = 1$, $B \cdot \delta_0 = 12$, and $B \cdot \delta_1 = -1$. Let $\omega_{\calX/B}$ be the relative dualizing sheaf of this family $\pi: \calX \ra B$, $\calX_1$ the part of the universal curve corresponding to the pencil of plane cubics, $\calX_2 \cong C_2 \times B$, $\pi_1: \calX_1\ra B$, and $\pi_2: \calX_2 \ra B$. Then, \[\omega_{\calX/B}|_{\calX_1} \cong \pi_1^*\eta \otimes \o_{\calX_1}(Q_1) \otimes \o_{\calX_1}(C_1)\] where $Q_1$ is the section corresponding to the separating node of the family and $C_1$ is the cubic with the zero differential. Intersecting both sides with $Q_1$ and applying $\pi_{1*}$ gives \[Q_1 \cdot (\omega_{\calX_1} \otimes \o_{\calX_1}(Q_1)) = \deg_{\pi} \eta + Q_1^2 + C_1 \cdot Q_1.\] This implies that $\deg_{\pi} \eta = 0$.  It remains to compute $B \cdot D$. \\

\noindent \textbf{Claim}. $(2g-2)!(B \cdot D) = (\varphi_*\pi^*B) \cdot W_{2g-2}$. \\

\noindent \textit{Proof of claim.} Recall from above that $(2g-2)! (B \cdot D) =  \pi^*B \cdot D_{2g-2} = \pi^*B \cdot (\varphi^*W_{2g-2} - E)$. Thus it suffices to show $\pi^*B \cdot E = 0$. Our method is to examine the types of pointed stable differentials appearing in $\pi^{-1}(B)$ and then to enumerate all possible codimension one boundary loci containing such differentials. We then show that these loci are indeed not in $E$. 

Pointed stable differentials in $\pi^{-1}(B)$ have two possible structures, illustrated in Figure 3. First, suppose $B_t$ is a member of the family with nonzero differential on the genus 1 component. Consider the preimage of $B_t$ in $\ov{\calP}(1^{2g-2})$ - that is, the curve with a twice-marked rational component $(R, z_1, z_2)$ inserted at the node, where $z_1, z_2$ are the zeros of a unique differential $\eta_R \sim z_1 + z_2 - 2q^-_1 -2q^-_2$ satisfying $\text{Res}_{q^-_i}\eta_R = 0$ for $i =1,2$ and where $q_1^-$ and $q_2^-$ are the points on $R$ that meet $C_1$ and $C_2$, respectively. (Note that the global residue condition (see \cite{BCGGM}) totally determines $z_1$ and $z_2$ up to automorphism: if we fix $q^-_1$, $q^-_2$, and $z_1$ to be $0$, $\infty$, and $1$ respectively and denote $z_2$ by $a$, then locally at 0 the differential is \[c\frac{(z-1)(z-a)}{z^2}dz.\] Since the residue at 0 is given by $-(a+1)$ and this must be 0, we have that $a = -1$.) Now suppose that the differential on the genus 1 component $C_1$ is zero. Then the preimage of such a curve in $\ov{\calP}(1^{2g-2})$ consists of the curve with markings $z_1$, $z_2$ on $C_1$ such that there exists a differential $\eta_1 \sim z_1 + z_2 - 2q_1$. Note that since $\P H^0(K_{C_1}(2q_1)) \cong \P^1$, this curve parametrized in $B$ has a one-dimensional preimage in $\ov{\calP}(1^{2g-2})$, which we call $E'$. 

\begin{figure}[h!]
	\centering
	\def \svgscale{.27}
	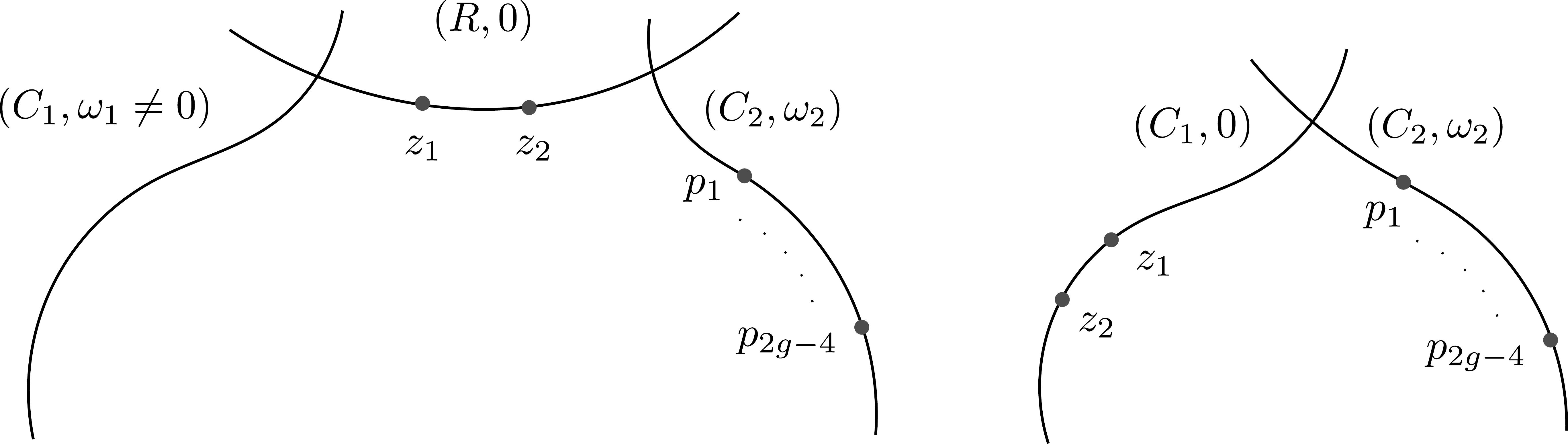
	\caption{Pointed stable differentials in $\pi^{-1}(B)$.}
\end{figure} 

If an irreducible divisor $F$ in $E$ contains a pointed stable differential in $\pi^{-1}(B)$, then generic points of $F$ must parametrize either one or two-nodal curves. Suppose a generic point of $F$ parametrizes one-nodal curves. Using the notation from Lemma 3.2, there are two possibilities for $F$: $E_{g-1}(0)$ and $E_1(0)$. Lemma 3.2 shows that neither of these are in $E$. Now suppose that a generic point in $F$ parametrizes two-nodal curves of the type illustrated in Figure 3. In order to show that such a locus is not contained in $E$, we must exhibit some pointed stable differential of this form which does not get mapped into $W_{2g-2}$. First we pick a $q_2$ which is not a Weierstrass point of $C_2$. Then note that neither $z_1$ nor $z_2$ can be limit Weierstrass points because an admissible cover of degree $g$ for this curve totally ramified at one of these points requires total ramification at $q_2 \in C_2$ (since $q_2$ is not a Weierstrass point of $C_2$), as well as at least simple ramification at $q_1$ in the genus 1 component. Moreover, if we pick a general $\omega_2$, its zeros $p_1, \dots, p_{2g-4}$ also avoid the finitely many limit Weierstrass points on $C_2$, by the same argument used in the proof of Lemma 3.2. This proves the claim.   \\

We now calculate $ \frac{1}{(2g-2)!}(\varphi_*\pi^*B \cdot W_{2g-2})$. When the differential on the genus 1 component is nonzero, none of the marked points can be limit Weierstrass points, by the reasoning in the previous paragraph. Now suppose that the differential on the genus 1 component is zero. There are $g^2-1$ points $p$ satisfying $gp \sim gq_1$ with $p \neq q_1$. Note that if $z_1$ is such a point, then $z_2$ is as well, by the relation $z_1 + z_2 \sim 2q$. Thus, there arise $\frac{g^2-1}{2}$ such differentials and each differential results in a multiplicity 2 intersection with $W_{2g-2}$.  Let us also note that $E'$ occurs with multiplicity 1 in the preimage of $B$. This is a direct consequence of our choice of differential on the genus 1 component by the section $s$, which meets the locus of zero differentials in $\ov{\calH}_1$ with multiplicity 1.  

Putting this all together, $B \cdot D =   \frac{1}{(2g-2)!}(\varphi_*\pi^*B \cdot W_{2g-2}) = g^2-1$. This gives a relation \[ g^2-1 = b + 12c_0 - c_1.\] 

\noindent \textit{Test curve $C$.} \\ 

Consider the following test curve in $\P\ov{\calH}_g$: fix general curves $C_1$, $C_2$ of genus $i$, $1 \leq i \leq \lfloor g/2 \rfloor$, and genus $g-i$ respectively, attached at a node $q_1 \sim q_2$, along with $\omega_1$ and $\omega_2$ general nonzero holomorphic differentials on $C_1$ and $C_2$ respectively, with zeros $z_1, \dots, z_{2i-2}$ and $p_1, \dots, p_{2g-2i-2}$. Now vary the point of attachment $q_2$ in $C_2$ (see Figure 4). 

\begin{figure}[h!]
	\centering
	\def \svgscale{.32}
	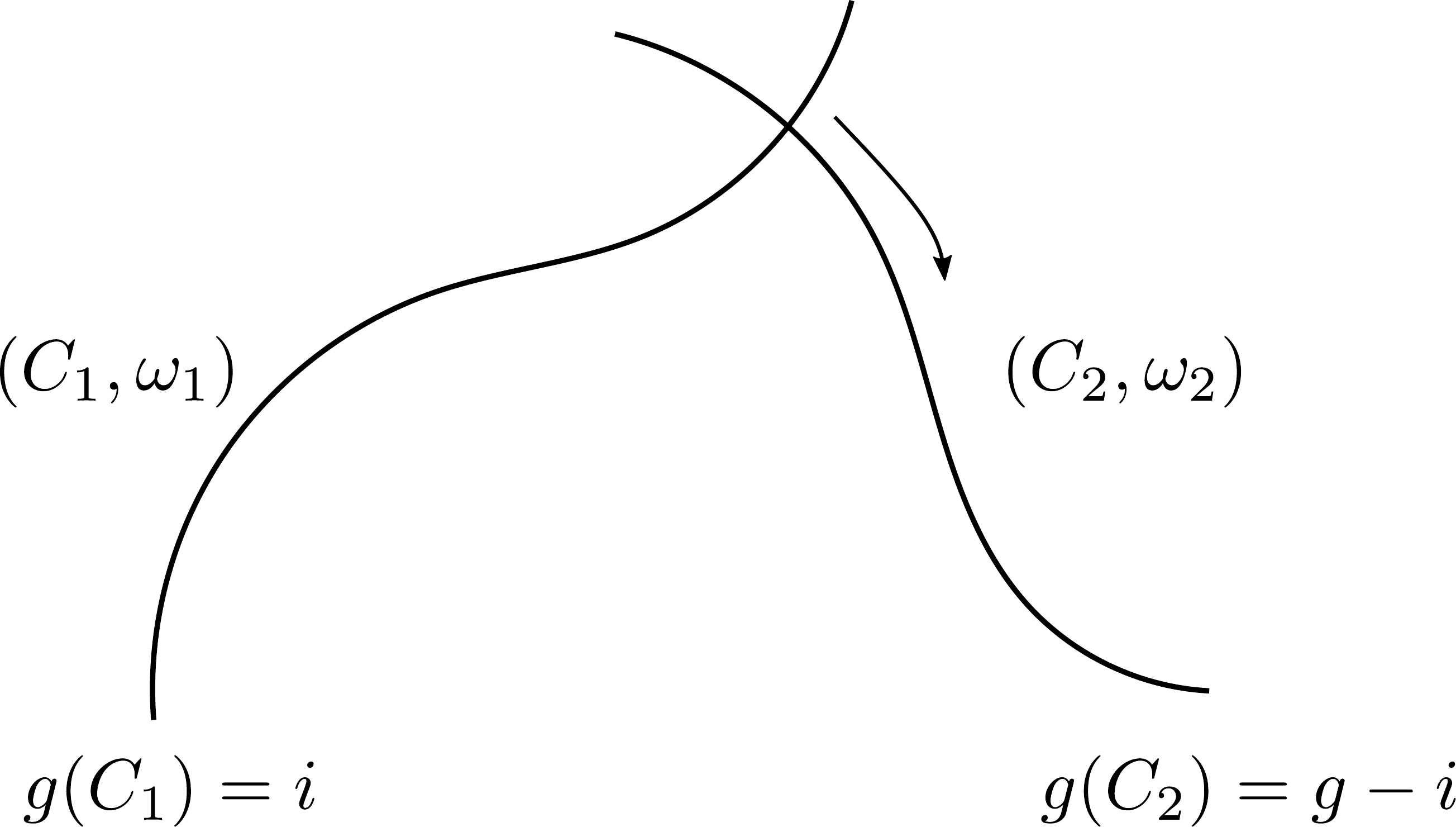
	\caption{Test curve $C$.}
\end{figure}

In order to find $C \cdot \eta$, we intersect the relation \[ \omega_{\calX/C}|_{\calX_1} \cong \pi_1^*\eta \otimes \o_{\calX_1}(Q_1) \otimes \o_{\calX_1}\bigg(\sum_{j=1}^{2i-2} Z_j\bigg)\] with $Q_1$ and apply $\pi_*$. This gives $C \cdot \eta = 0$. Since $C_1$ and $C_2$ are both smooth $C \cdot \delta_0 = 0$. Moreover, for $C_q$ a curve in this family, $H^0(K_{C_q}) = H^0(K_{C_1}) \oplus H^0(K_{C_2})$ and since this is independent of $q$, we have $C \cdot \lambda = 0$. This entire family is contained in $\delta_i$, so \[C \cdot \delta_i = \deg(N_{{q \times C_2}/{C_1 \times C_2}} \otimes N_{\Delta/C_2 \times C_2})  = \Delta^2 = 2-2(g-i).\] 
Hence, \[C \cdot D = (2-2g+2i)c_i.\] \\

\noindent \textbf{Claim}. $(2g-2)!(C \cdot D) = \varphi_*\pi^*C \cdot W_{2g-2} = (2i-2)B_1 \cdot W + (2g-2i-2)B_2 \cdot W + 2B_3 \cdot W$ where $B_1$, $B_2$, $B_3$ are the curve classes in $\ov{\M}_{g, 1}$ illustrated in Figure 5. \\

\begin{figure}[h!]
	\centering
	\def \svgscale{.28}
	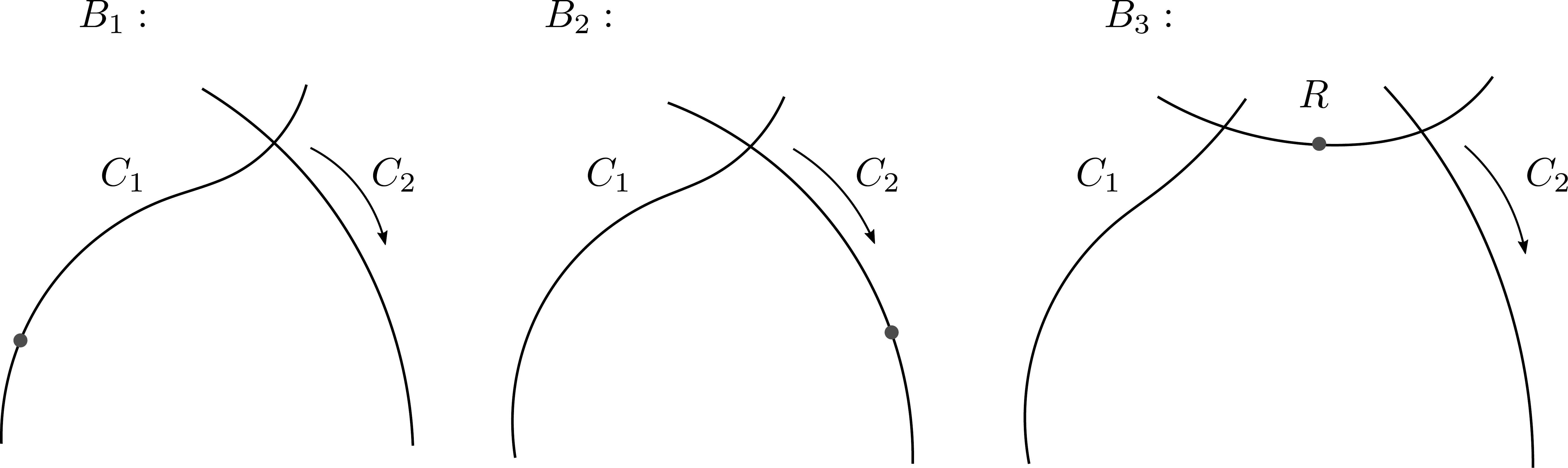
	\caption{Curves $B_1$, $B_2$, $B_3$ in $\ov{\M}_{g,1}$.}
\end{figure}

\noindent \textit{Proof of claim.} Again the goal is to show that $\pi^*C \cdot E = 0$ and we use the same method as above. Consider the morphism $\pi: \ov{\calP}(1^{2g-2}) \ra \P\ov{\calH}_g$. Let $(C_q, \omega= (\omega_1, \omega_2))$ be a stable differential parameterized in $C$ where $q$ coincides with none of the zeros of $\omega_2$. The preimage of such a stable differential under $\pi$ is the collection of data $(C_q', \omega' = (\omega_1, 0, \omega_2), z_1, \dots , z_{2i-2}, s_1, s_2, p_1, \dots, p_{2g-2i-2})$ where $C_q'$ is the curve $C_q$ along with a twice-marked rational component $(R, s_1, s_2)$ inserted at the node, and where $s_1$ and $s_2$ are the zeros of some differential $\eta_R \sim s_1 + s_2 - 2q^-_1 - 2q^-_2$ satisfying $\text{Res}_{q^-_1}\eta_R = \text{Res}_{q^-_2}\eta_R = 0$, as before. When $q$ meets some $p_i$, we blow up to get a curve with an additional rational component $S$, which bears a twisted differential of type $(1, -3)$ with its pole at the point where $S$ meets $C_2$ (see Figure 6). 

\begin{figure}[h!]
	\centering
	\def \svgscale{.28}
	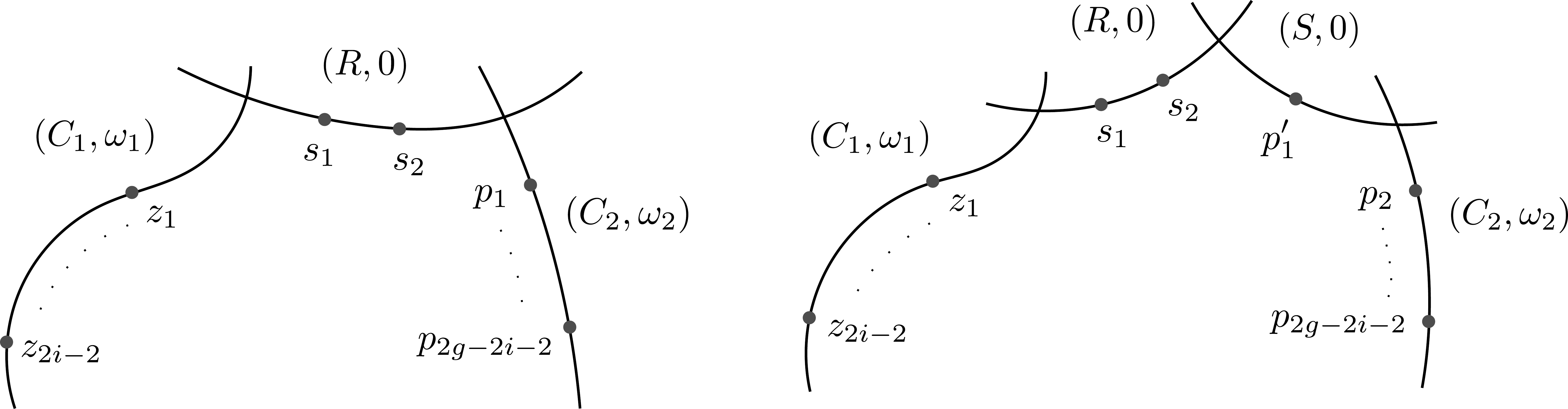
	\caption{Pointed stable differentials in $\pi^{-1}(C)$.}
\end{figure}

If an irreducible divisor $F$ in $E$ contains a pointed stable differential in $\pi^{-1}(C)$, then generic points of $F$ must parametrize either one, two, or three-nodal curves. Suppose a generic point of $F$ parametrizes one-nodal curves. The possibilities are the loci $E_i(0)$, $E_{g-i}(0)$, and $E_{g-i}(1)$. By Lemma 3.2, none of these are in $E$. Now suppose that a generic point of $F$ parametrizes two-nodal curves of the type illustrated on the left in Figure 6. Using the same strategy as with test curve $B$, we will exhibit a pointed stable differential in this locus which does not get mapped into $W_{2g-2}$. We first pick $q_1$ and $q_2$ not Weierstrass points of $C_1$ and $C_2$ respectively, as well as general $\omega_1$ and $\omega_2$ whose zeros avoid the finitely many limit Weierstrass points on each component. Likewise, $s_1$ and $s_2$ cannot be limit Weierstrass points for the same reason as before. This shows that such a locus is not in $E$. Finally, any other possible two or three-nodal locus containing pointed stable differentials in $\pi^{-1}(C)$ is of codimension 2 in $\ov{\calP}(1^{2g-2})$, since we require the differential on $C_2$ to have a zero at the point of attachment to the rational bridge. 

The divisors $W_{2g-2}$ and $\sum_{i=1}^{2g-2}f_i^*W$ can only possibly differ on loci in $\ov{\M}_{g, 2g-2}$ where the curves have rational components which are stable due to the presence of 2 or more marked points. Indeed, let $X$ be a nodal curve with a genus $g$ component $C$ attached to a genus 0 component $R$ at a point $p$ and let $z_1$ be one of the marked points on $R$. If $z_1$ is a limit Weierstrass point of $C$, then $p$ must be a Weierstrass point of $C$, and so this curve maps into $W$. So we conclude that $W_{2g-2}$ and $\sum_{i=1}^{2g-2}f_i^*W$ do not differ. Thus, \begin{align*} \varphi_*\pi^*C \cdot W_{2g-2} & = \varphi_*\pi^*C \cdot \sum_{i=1}^{2g-2}f_i^*W \\ & = \sum_{i=1}^{2g-2}f_{i*}\varphi_*\pi^*C \cdot W \\ & = (2i-2)B_1 \cdot W + (2g-2i-2)B_2 \cdot W + 2B_3 \cdot W.\end{align*} This proves the claim. \\

Recall that \[ W = \frac{g(g+1)}{2}\psi - \lambda - \sum_{i=1}^{g-1}\frac{(g-i)(g-i+1)}{2}\delta_i\] where $\delta_i$ now denotes the boundary divisor with the genus $i$ component marked \cite{cukierman1989}.
Let us first find $B_2 \cdot W$. When $q$, the node, meets the marked point $p$, we blow up and get that $B_2 \cdot \delta_i = 1$. Let $P$ be the section corresponding to the marked point before the blow up procedure, $\pi$ be the associated blow up map, and $\wt{P}$ the strict transform of $P$. Then \[B_2 \cdot \psi = -\wt{P}^2 = -(\pi^*P - E)^2 = -P^2 + 1 = 1.\] Since $B_2 \subset \delta_{g-i}$, \[B_2 \cdot \delta_{g-i} = \deg(N_{{q \times C_2}/C_1 \times C_2} \otimes N_{\wt{\Delta}/S}) = 1-2g+2i ,\] where $S$ is $C_2 \times C_2$ blown up at the point $(p, p)$. Finally, we want to compute $B_2 \cdot \lambda$. Let $\phi: \ov{\M}_{g,1} \ra \ov{\M}_g$. Then, $B_2 \cdot \phi^*\lambda = \phi_*B_2 \cdot \lambda = 0$, since the Hodge bundle does not vary when $q$ varies in $C_2$, as explained above. This gives $B_2 \cdot W = i(g-i)(i+2)$. 

Now we compute $B_1 \cdot W$. By similar reasoning $B_1 \cdot \psi = B_1 \cdot \lambda = 0$. Moreover, \[B_1 \cdot \delta_i = \Delta^2 = 2-2(g-i).\] Thus, \[ B_1 \cdot W = (g-i-1)(g-i)(g-i+1).\] Similarly, $B_3 \cdot W = (g-i-1)(g-i)(g-i+1)$. Putting this all together, we get that \[ c_i = (g+3)i(i-g).\] Using test curve $B$, we also find \[ c_0 = -\frac{g(g+1)}{2}.\] This completes the proof of Theorem 1.1.
\end{proof}
\vskip .5cm

\noindent We will now check the divisor class formula for $D$ for a couple curve classes in low genus. 

\begin{example} We will verify the above divisor class for a curve class $B$ consisting of a general pencil of plane quartics with canonical divisors given by a fixed general line $L \subset \P^2$. In genus 3 \[ D = -24\eta + 68\lambda - 6\delta_0 - 12\delta_1.\] By standard calculations in \cite[Chapter 3]{harris1998moduli}, we have that $B \cdot \lambda = 3$, $B \cdot \delta_0 = 27$, and $B \cdot \delta_1 = 0$. Let $\pi: \calX \ra B$ be the total space of the family. Intersecting the relation \[ \omega_{\calX/B} = \pi^*\o_B(-1) \otimes \o_{\calX}(L)\] with $E$, the blow up of a basepoint of the pencil, and then applying $\pi_*$ gives \[-E^2 = \text{deg}_{\pi}\eta + (L \cdot E). \] Hence, $\text{deg}_{\pi}\eta = 1$. So, $B \cdot D = 18$. 

On the other hand, we have that the degree of the curve in $\P^2$ traced out by the flex points of a general pencil of degree $d$ curves is $6d-6$ (see \cite[Section 11.3]{eisenbud20163264}). Thus, we have verified that indeed $B \cdot D = 18$.

\end{example}
\vskip .5cm

\begin{example} We will also verify the divisor class for a curve class $B$ consisting of a general pencil of genus 4 canonical curves in a quadric $Q \subset \P^3$ with canonical divisors determined by a fixed hyperplane $H$.  In genus 4, \[ D = -60\eta + 114\lambda - 10\delta_0-21\delta_1-28\delta_2.\] By the same reasoning as above, $B \cdot \eta = 1$. We can consider $\calX \subset \P^1 \times Q$ as a divisor of type $\o_{\P^1 \times Q}(1, 3)$ and $\P^1 \times Q \subset \P^1 \times \P^3$ as a divisor of type $\o_{\P^1 \times \P^3}(0, 2)$. Then, using adjunction, we find that \[ \omega_{\P^1 \times Q} = \o_{\P^1 \times Q}(-2, -2).\] Using adjunction again, \[ \omega_{\calX} = \o_{\calX}(-1, 1).\] Thus, \[ \omega_{\calX/B} = \o_{\calX}(1, 1).\] Let $\pi_1$ and $\pi_2$ be the two projections associated to the product $\P^1 \times \P^3$. Let $\alpha$ be the divisor class $\pi_1^*\o(1)$ and let $\beta$ be the divisor class $\pi_2^*\o(1)$. Then 
\[B \cdot \kappa = (\alpha + \beta)(\alpha + \beta)(\alpha + 3\beta)(2\beta) = 14.\] Moreover, $B \cdot \delta_0 = 34$ (see \cite[Section 7.4]{eisenbud20163264}), and $B \cdot \delta_i = 0$ for $i >0$. Using the relation \[ \lambda = \frac{\kappa + \delta}{12}\] we find that $B \cdot \lambda = 4$. Thus, \[ B \cdot D = 56.\] 

We can also see that $B \cdot D = 56$ by a standard Porteous formula calculation as in \cite{cukierman1989}. Let $\pi: \calX \ra B$ be the total space of the family. Note that since the curve spanned by the Weierstrass points is codimension 1, we need not worry about having singular fibers. Let $\calX_2 = \calX \times_{B} \calX$ and let $\pi_1$ and $\pi_2$ be the two projections to $\calX$. Let 
\begin{align*} 
& E = (\pi_1)_*(\pi_2^*\omega_{\calX/B}) \\
& F = (\pi_1)_*(\pi_2^*\omega_{\calX/B} \otimes \o_{\calX_2}/\mathcal{I}_{\Delta}^4).
\end{align*}
Now regard $E$ and $F$ as vector bundles over just the smooth locus of $\calX$ and consider the morphism $\varphi: E \ra F$. We are interested in the locus $W_B$ on $\calX$ where $\text{rank}(\varphi_x) \leq 3$. Note that $F$ is the bundle of relative principal parts $P^4(L)$ where $L = \omega_{\calX/B}$. By the standard exact sequence \[0 \ra L \otimes \text{Sym}^{k-1}(\omega_{\calX/B}) \ra P^k(L) \ra P^{k-1}(L) \ra 0, \] we have \[ c_1(F) =  c_1(P^4(L)) = \sum_{k=1}^{4} (c_1(L) +(k-1)(c_1(\omega_{\calX/B}))) = 10\omega_{\calX/B}.\] Let $f$ be the class of a fiber of $\pi$. Applying Porteous' formula gives us that 
\begin{align*} W_B & = c_1(F) - c_1(E) \\
& = 10\omega_{\calX/B} - \pi^*\lambda \\
& = 10\omega_{\calX/B} - 4f.
\end{align*}
Let $\text{bl}: \calX \ra Q$ be the blow up morphism and $E_1, \dots, E_{18}$ the exceptional divisors over the basepoints of the pencil. Intersecting $W_B$ with $\text{bl}^*(l_1 + l_2) = \frac{1}{3}f + \frac{1}{3}\sum_{i=1}^{18}E_i$ shows that the degree of the curve traced out by the Weierstrass points of the pencil is 56. 

\end{example}
\vskip .5cm

\section{The extremality of $\P\ov{\calH}_g(2)$ and $\P\ov{\mathcal{Q}}_g(2)$}

In this section we will prove Theorem 1.2. We will use the following condition from \cite{chen2013strata} to check the extremality of $\P\ov{\calH}_g(2) \subset \P\ov{\calH}_g$ and $\P\ov{\mathcal{Q}}_g(2) \subset \P\ov{\mathcal{Q}}_g$.
\vskip .5cm
\begin{lemma}{\cite[Proposition 4.1]{chen2013strata}} Suppose $D$ is an irreducible effective divisor and $A$ is a big divisor in a projective variety $X$. Let $S$ be a set of irreducible effective curves contained in $D$ such that the union of these curves is Zariski dense in $D$. If for every curve $C$ in $S$ we have \[C \cdot (D + dA) \leq 0 \] for a fixed $d > 0$, then $D$ is an extremal divisor in the pseudoeffective cone $\ov{\text{Eff}}^1(X)$, i.e, if for any linear combination $D = D_1 + D_2$ with $D_i$ pseudoeffective, $D$ and $D_i$ are proportional. 
\end{lemma}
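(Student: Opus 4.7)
The plan is to argue by contradiction. Suppose $D \equiv D_1 + D_2$ in $\ov{\text{Eff}}^1(X)$ with $D_1$ not a nonnegative multiple of $D$; the aim is to produce a curve $C \in S$ with $C \cdot D \geq 0$, contradicting the strict negativity established below.

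First I exploit the bigness of $A$: by Kodaira's lemma, $A \equiv H + F$ with $H$ an ample $\Q$-divisor and $F$ an effective $\Q$-divisor, and we may arrange $D \not\subseteq \mathrm{supp}(F)$. A general $C \in S$ then avoids $\mathrm{supp}(F)$, so $C \cdot A \geq C \cdot H > 0$, and the hypothesis $C \cdot (D + dA) \leq 0$ forces $C \cdot D \leq -d\, (C \cdot A) < 0$ for a Zariski-dense collection of curves in $S$.

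Next I normalize the decomposition. For $i = 1,2$ let $\alpha_i = \sup\{t \geq 0 : D_i - tD \in \ov{\text{Eff}}^1(X)\}$; these suprema are attained since the pseudoeffective cone is closed. Writing $D_i = \alpha_i D + D_i'$, each $D_i'$ is pseudoeffective and $D_i' - \epsilon D$ fails to be pseudoeffective for every $\epsilon > 0$. Substituting yields $(1 - \alpha_1 - \alpha_2) D \equiv D_1' + D_2'$; since $-\epsilon D$ is not pseudoeffective (as $D$ is a nonzero irreducible effective divisor), one checks that $\gamma := 1 - \alpha_1 - \alpha_2 > 0$, and moreover $D_1' \neq 0$ because $D_1 \notin \R_{\geq 0} D$. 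Rescaling by $\gamma$ reduces us to the case $D \equiv D_1 + D_2$ with both $D_i$ pseudoeffective, nonzero, and carrying no multiple of $D$ (the case $D_2 = 0$ would make $D_1$ proportional to $D$).

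Finally I deduce the contradiction. The curves in $S$ sweep out $D$ and hence form a movable curve class on $D$. A pseudoeffective class $N$ on $X$ that carries no multiple of $D$ restricts to a pseudoeffective class on $D$, via Boucksom's divisorial Zariski decomposition, which isolates the $D$-part of $N$ and leaves a residual class whose restriction to $D$ is well defined and pseudoeffective. By the Boucksom--Demailly--Paun--Peternell duality, this restriction has nonnegative intersection with every movable curve class on $D$; applying this to both $D_1$ and $D_2$ gives $C \cdot D_1, C \cdot D_2 \geq 0$ for general $C \in S$, hence $C \cdot D \geq 0$, contradicting the first step. The main obstacle is precisely this last step: the passage from numerical pseudoeffectiveness on $X$ (with no $D$-content) to pseudoeffectiveness of the restriction to $D$ is the only nontrivial input, and is what requires Boucksom's divisorial Zariski decomposition, or an equivalent approximation argument by big classes $N + \epsilon H$ whose effective representatives can be chosen to avoid $D$.
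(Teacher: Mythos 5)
First, a point of comparison: the paper does not prove this lemma at all --- it is quoted verbatim from \cite{chen2013strata} --- so your argument has to be measured against the proof given there. Your steps 1 and 2 are essentially fine, apart from one unjustified (and in general false) claim in step 1: one cannot always arrange a Kodaira decomposition $A\equiv H+F$ with $D\not\subseteq\mathrm{supp}(F)$ (take $A=H+mD$ for $D$ a curve of negative self-intersection on a surface and $m\gg0$; then $A\cdot D<0$, which is incompatible with $A\equiv H'+F'$, $D\not\subseteq\mathrm{supp}(F')$). This is repairable: write $F=aD+F'$ with $D\not\subseteq\mathrm{supp}(F')$ and deduce $C\cdot D\le-\tfrac{d}{1+da}\,C\cdot H<0$ for any $C\in S$ avoiding $\mathrm{supp}(F')$. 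The genuine gap is your final step. A set of curves whose union is Zariski dense in $D$ does \emph{not} yield a movable curve class on $D$: movability requires an algebraic covering family (or a limit of such), whereas the curves in $S$ may each be rigid and the union countable --- exactly the situation in Section 4, where $S$ consists of Teichm\"uller curves. A pseudoeffective class on $D$ can be strictly negative on a Zariski-dense countable union of rigid curves (e.g.\ a big class on a surface with infinitely many negative extremal curves), so establishing that $D_i'|_D$ is pseudoeffective does not contradict $C\cdot D<0$, and the BDPP duality you invoke gives nothing. This is precisely why the hypothesis involves a big $A$ and a \emph{fixed} $d>0$: they provide the uniform quantitative control that no soft duality argument can replace. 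Indeed, your scheme would prove the stronger statement ``$C\cdot D<0$ for a dense family of curves in $D$ implies $D$ extremal,'' which is not what the lemma asserts and is not true by this method.

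For the record, the argument in \cite{chen2013strata} keeps the quantitative hypothesis in play to the very end. For each $n$ the classes $D_i+\tfrac1n A$ are big, hence numerically equivalent to effective $\Q$-divisors $E_{i,n}=c_{i,n}D+F_{i,n}$ with $c_{i,n}\ge0$ and $D\not\subseteq\mathrm{supp}(F_{i,n})$. By density one chooses, \emph{for each $n$ separately}, a curve $C_n\in S$ not contained in $\mathrm{supp}(F_{1,n}+F_{2,n})$; intersecting $D+\tfrac2nA\equiv(c_{1,n}+c_{2,n})D+F_{1,n}+F_{2,n}$ with $C_n$ and using $C_n\cdot D\le-d\,C_n\cdot A<0$ gives $c_{1,n}+c_{2,n}\ge1-\tfrac{2}{nd}$, a bound that is uniform in $n$ only because $d$ is fixed. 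Since the $c_{i,n}$ are bounded above (intersect with a power of an ample class), a subsequence converges to $c_i$ with $c_1+c_2\ge1$ and $D_i-c_iD$ pseudoeffective; writing $D\equiv(c_1+c_2)D+\sum_i(D_i-c_iD)$ and pairing with $H^{\dim X-1}$ forces $c_1+c_2=1$ and $D_i\equiv c_iD$. Your normalization in step 2 is compatible with this, but the curve-by-curve limiting argument --- not restriction to $D$ followed by duality --- is the actual content of the lemma.
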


\begin{proof}[Proof of Theorem 1.2] Recall that \[ \P\ov{\calH}_g(2) = 24\lambda - (6g-6)\eta - 2\delta_0 - 3\sum_{i=1}^{\lfloor g/2 \rfloor} \delta_i.\] Let $C$ be the closure of a Teichm\"uller curve generated by some $(X, \omega) \in \P\ov{\calH}_g(2)$. Let $\chi = 2g(C) - 2 + |\Delta| = 2C \cdot \eta$ \cite{moller2006variations}, where $|\Delta|$ is the number of cusps in $C$, and $L$ the sum of its first $g$ Lyapunov exponents. We are concerned with the partition $\mu = (m_1, \dots, m_n) = (2, 1^{2g-4})$. Using  \cite[Proposition 4.8]{nonvaryinglyapsum} we have that 
\begin{align*}
C \cdot \lambda &= \frac{\chi}{2}L \\
C \cdot \delta_0 &= \frac{\chi}{2}(12L - 12\kappa_{\mu})
\end{align*}
where $\kappa_{\mu} = \frac{1}{12}\sum_{j=1}^n \frac{m_j(m_j+2)}{m_j+1}$. Since Teichm\"uller curves do not intersect higher boundary divisors (see \cite[Corollary 3.2]{nonvaryinglyapsum}), \[ C \cdot \delta_i = 0 \; \; \text{for} \; \; i>0.\] 

\noindent So, \[ C \cdot \P\ov{\calH}_g(2) = -\frac{\chi}{3}.\] Moreover, by \cite[Proposition 4.8]{nonvaryinglyapsum} \[ C \cdot \psi_i = \frac{C \cdot \lambda - (C \cdot \delta)/12}{(m_i + 1)\kappa_{\mu}} = \frac{\chi}{2(m_i+1)}.\] Since $\psi_i$ has positive degree on nonconstant families \cite[Chapter 6]{harris1998moduli} $\chi > 0$. Now let $A$ be an ample divisor in $\P\ov{\calH}$. We write \[ A = a\lambda + b\eta + \sum_{i=0}^{\lfloor g/2 \rfloor}c_i\delta_i.\] We must now choose a sufficiently small value $d$ such that $C \cdot (\P\ov{\calH}_g(2) + dA) \leq 0$ for all Teichm\"uller curves in $\P\ov{\calH}_g(2)$. Let \[ d = \inf_{\substack{ \text{Teichm\"uller curves} \\  \text{in}  \; \P\ov{\calH}_g(2)}} \Bigg\{ \frac{2}{3(b-12c_0\kappa_{\mu} + (a+12c_0)L)} \Bigg\} \] The expression in the brackets comes from solving for $d$ in $C \cdot (\P\ov{\calH}_g(2) + dA) = 0$ using the intersection information given above. Since $C \cdot A > 0$ ($A$ is ample) and $C \cdot \P\ov{\calH}_g(2) < 0$ for all Teichm\"uller curves $C$, the expression in the brackets will always be positive and will only depend on $L$. Moreover, the infimum may never be zero since the sum of Lyapunov exponents $L$ has a uniform upper bound $g$. Since Teichm\"uller curves in any stratum are Zariski dense, we have shown that $\P\ov{\calH}_g(2)$ is extremal by Lemma 4.1. 

Recall from \cite{korotkin2013tau} that \[ \P\ov{\mathcal{Q}}_g(2) = 72\lambda-10(g-1)\eta - 6\sum_{i=0}^{\lfloor g/2 \rfloor}\delta_i. \] Here we denote the partition $(d_1, \dots, d_n) = (2, 1^{4g-6})$. Let $C$ be a Teichm\"uller curve generated by a half translation surface $(X, q) \in \P\ov{\mathcal{Q}}_g(2)$. Let $L^+$ be the sum of the involution invariant Lyapunov exponents (see \cite{eskin2014sum} and \cite[Section 2.2]{quadlyapsum} for background material) and let $\chi = 2g(C) - 2 + |\Delta| = C \cdot \eta$ \cite{moller2006variations}. From \cite{eskin2014sum} we can write \[ L^+ = c_{\text{area}} + \kappa, \; \; \text{where} \; \; \kappa = \frac{1}{24} \sum_{j=1}^n \frac{d_j(d_j + 4)}{d_j + 2}\] and $c_{\text{area}}$ is the area Siegel-Veech constant of $(X, q)$. By \cite[Proposition 4.2]{quadlyapsum} \begin{align*} & C \cdot \lambda = \frac{\chi}{2}(c_{\text{area}} + \kappa) \\ & C \cdot \delta = 6\chi\cdot c_{\text{area}}. \end{align*}  

\noindent Hence, \[ C \cdot \P\ov{\mathcal{Q}}_g(2) = -\frac{\chi}{2}.\] If $a\lambda + b\eta + \sum_{i=0}^{\lfloor g/2 \rfloor}c_i\delta_i$ is an ample divisor, we can ensure that all coefficients for the boundary divisors are the same by adding on an appropriate effective divisor of boundary divisors. This gives us a big divisor \[A = a\lambda + b\eta + c\delta \] where $c = \max_i \{ c_i\}$. Note that when $C \cdot A \leq 0$, any $d >0$ satisfies the condition in Lemma 4.1. So assuming $C \cdot A > 0$, we set \[ d = \inf_{\substack{ \text{Teichm\"uller curves} \\  \text{in}  \; \P\ov{\mathcal{Q}}_g(2) \; \text{with} \; C \cdot A >0}} \Bigg\{ \frac{1}{2b + 12c_{\text{area}}c + aL^+} \Bigg\}.\] The expression in the brackets comes from solving for $d$ in the expression $C \cdot (\P\ov{\mathcal{Q}}_g(2) + dA) = 0$. Since $c_{\text{area}}$ is bounded from above and $L^+ = c_{\text{area}} + \kappa$, $d$ is positive and so $\P\ov{\mathcal{Q}}_g(2)$ is extremal by Lemma 4.1. 

\end{proof}

\noindent For completeness we include the following proposition.

\begin{prop} The boundary divisors $\delta_i$, $0 \leq i \leq \lfloor g/2 \rfloor$, span extremal rays in $\ov{\text{Eff}}^1(\P\ov{\calH}_g)$ and in $\ov{\text{Eff}}^1(\P\ov{\mathcal{Q}}_g)$.
\end{prop}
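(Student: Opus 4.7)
The plan is to apply Lemma 4.1 with carefully chosen covering families, adapting the test curves from the proof of Theorem 1.1.

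For $\delta_i \subset \P\ov{\calH}_g$ with $1 \leq i \leq \lfloor g/2 \rfloor$ (assuming $g \geq 3$, so that $g-i \geq 2$), I would mimic test curve $C$ of Section 3. Fix general $(C_1, q_1) \in \M_{i,1}$, $C_2 \in \M_{g-i}$, and holomorphic abelian differentials $\omega_j \in H^0(\omega_{C_j})$; then vary the attachment point $q_2 \in C_2$ and glue $q_1 \sim q_2$. The constant stable differential $(\omega_1, \omega_2)$ lifts to a section of the Hodge bundle over this base, producing an irreducible curve $C \subset \delta_i$. The intersection computations of test curve $C$ in Section 3 yield $C \cdot \eta = 0$, $C \cdot \lambda = 0$, $C \cdot \delta_i = 2 - 2(g-i) < 0$, and $C \cdot \delta_j = 0$ for $j \neq i$. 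A parameter count $(3i-2) + (3(g-i)-3) + (g-1) + 1 = 4g-5 = \dim \delta_i$ shows these curves sweep out $\delta_i$ Zariski densely. For any big divisor $A = a\lambda + b\eta + \sum_j c_j\delta_j$ on $\P\ov{\calH}_g$, one has $C \cdot A = c_i(2-2(g-i))$, so $C \cdot (\delta_i + dA) = (2-2(g-i))(1 + dc_i) \leq 0$ for $d > 0$ small and uniform across the family; Lemma 4.1 then forces $\delta_i$ to be extremal.

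For $\delta_0 \subset \P\ov{\calH}_g$, I would use the analogous family with $(C', p) \in \M_{g-1, 1}$ general, $q \in C'$ varying over the base $B \cong C'$, and $p \sim q$ glued to form a family of irreducible nodal curves of arithmetic genus $g$. The key technical input is that the Hodge bundle $\calH$ over $B$ fits in a short exact sequence
\[
0 \to H^0(\omega_{C'}) \otimes \o_B \to \calH \to L \to 0,
\]
where $L \cong \o_B$ is trivialized by the residue-normalized pole differential $\eta_q$ (with $\text{Res}_p \eta_q = 1$, $\text{Res}_q \eta_q = -1$, uniquely defined modulo $H^0(\omega_{C'})$), and the sequence splits via the canonical lift $\wt\eta_q \in H^0(\omega_{X_q})$ obtained by imposing vanishing $a$-periods. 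Hence $\calH \cong \o_B^g$ is trivial, and each $[\omega] \in \P^{g-1}$ defines a constant section whose image $C \subset \delta_0$ satisfies $C \cdot \eta = 0$, $C \cdot \lambda = 0$, $C \cdot \delta_0 = 4-2g < 0$ (computed via clutching $\xi_0^*\delta_0 = -\psi_1 - \psi_2$ together with $\ov{B} \cdot \psi_2 = 2g-4$), and $C \cdot \delta_j = 0$ for $j \geq 1$. Varying $(C', p)$ and $[\omega]$ gives a family of total dimension $(3g-5) + (g-1) + 1 = 4g-5 = \dim \delta_0$, Zariski dense in $\delta_0$, and Lemma 4.1 again yields extremality.

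For $\P\ov{\mathcal{Q}}_g$, the argument is entirely analogous, with holomorphic quadratic differentials in place of abelian differentials and using the corresponding triviality of the bundle of quadratic differentials over the 1-parameter base. The main obstacle will be rigorously establishing the triviality of $\calH$ over the base family used for $\delta_0$: the splitting of the residue extension via vanishing $a$-periods is naturally analytic, and one must verify (e.g., by GAGA) that it descends to an algebraic splitting, thereby producing the covering family of constant sections required by Lemma 4.1.
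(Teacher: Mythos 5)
Your treatment of $\delta_i$ for $i\geq 1$ is essentially sound, and it is close to the paper's: the paper uses the same attach-and-vary family with $\delta_i$-degree $2-2(g-i)<0$, except that it builds the curve downstairs in $\ov{\M}_g$ and lifts it to $\P\ov{\calH}_g$ by slicing the $\P^{g-1}$-bundle $f^{-1}(C)$ with $g-1$ general hyperplane classes, then applies the simpler moving-curve criterion (Lemma 4.3) via the projection formula rather than Lemma 4.1. The genuine gap is in your $\delta_0$ argument, and it sits exactly where you flagged it: the extension
\[
0 \to H^0(\omega_{C'})\otimes\o_B \to \calH|_B \to L \to 0
\]
does not split, and your proposed splitting is not a section at all. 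The differential $\wt\eta_q$ normalized by vanishing $a$-periods is $\int_q^p\omega(\,\cdot\,,y)\,dy$ for the fundamental bidifferential $\omega$; it depends on the path from $q$ to $p$, and when $q$ traverses a $b$-cycle of $B\cong C'$ it changes by $2\pi i$ times a nonzero holomorphic differential. So the "canonical lift" is a multivalued section with nontrivial monodromy valued in $H^0(\omega_{C'})$, GAGA has nothing to descend, and the extension class (essentially the derivative of the period matrix in the plumbing parameter) is nonzero. Consequently $\calH|_B$ is not trivial, and the only honest constant sections are those lying in the subbundle $H^0(\omega_{C'})\otimes\o_B$, i.e.\ differentials with vanishing residue at the node. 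Those sweep out a locus of dimension $(3g-5)+(g-2)+1=4g-6$ inside the $(4g-5)$-dimensional $\delta_0$, so Zariski density fails and Lemma 4.1 does not apply. (Separately, your intersection number is off: accounting for the blow-up at $q=p$ one gets $C\cdot\delta_0=\wt{P}^2+\wt{\Delta}^2=2-2g$ rather than $4-2g$; note that your value degenerates to $0$ at $g=2$, where negativity would be lost. The same structural problem propagates to your "entirely analogous" quadratic-differential case.)

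The paper's construction is designed precisely to avoid knowing anything about the extension structure of $\calH$ over the base: take the moving curve $C\subset\delta_0\subset\ov{\M}_g$ (fixed $p$ glued to moving $q$ on a genus $g-1$ curve, with $C\cdot\delta_0=2-2g<0$), pass to the full preimage $f^{-1}(C)$, and cut it by $g-1$ general hyperplane classes. Bertini gives an irreducible curve $C'$ dominating $C$, the projection formula gives $C'\cdot f^*\delta_0=f_*C'\cdot\delta_0<0$, and varying the hyperplanes and the curve $C$ shows $C'$ is a moving curve in $f^*\delta_0$. If you want to keep your section-based strategy for $\delta_0$, you would have to work with multisections or general curves in $\P(\calH|_B)$ and track the resulting nonzero $\eta$-degree; replacing the constant sections by the hyperplane slices is the cleaner repair.
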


\noindent We will use the following well-known condition to check the extremality of the boundary divisors. 

\begin{lemma}{\cite[Lemma 4.1]{chen2014genusone}} Suppose that $C$ is a moving curve in an irreducible effective divisor $D$ of a projective variety $X$. Suppose that $C$ satisfies $C \cdot D < 0$. Then $D$ is extremal. 
\end{lemma}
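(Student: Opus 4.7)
The plan is to establish the general criterion that an irreducible effective divisor $D$ containing a moving curve $C$ with $C \cdot D < 0$ must be extremal in $\ov{\text{Eff}}^1(X)$. Suppose $D \equiv D_1 + D_2$ with $D_i$ pseudoeffective classes; I must show each $D_i$ is a nonnegative multiple of $[D]$. The key observation driving everything is that any effective $\Q$-divisor $E$ that does not contain $D$ as a component satisfies $C \cdot E \geq 0$: indeed, because $C$ moves in a family whose members sweep out a Zariski dense subset of $D$, a generic member $C'$ meets $E \cap D$ in a finite (possibly empty) set of points, and $C \cdot E = C' \cdot E \geq 0$ by numerical equivalence.

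First I would fix an ample class $A$ on $X$ and, for each $\epsilon > 0$, use that $D_i + \epsilon A$ lies in the interior of $\ov{\text{Eff}}^1(X)$, hence is big, and so is represented by an effective $\Q$-divisor $E_i^\epsilon$. I would then decompose $E_i^\epsilon = \alpha_i^\epsilon D + F_i^\epsilon$ where $\alpha_i^\epsilon \geq 0$ is the multiplicity of $D$ in $E_i^\epsilon$ and $F_i^\epsilon$ is effective with no component equal to $D$. Intersecting $E_i^\epsilon \equiv D_i + \epsilon A$ with $A^{n-1}$ (for $n = \dim X$) and using $F_i^\epsilon \cdot A^{n-1} \geq 0$ gives a uniform bound on $\alpha_i^\epsilon$, so after passing to a subsequence there exist limits $\alpha_i := \lim \alpha_i^\epsilon \geq 0$ and pseudoeffective classes $F_i$ with $D_i \equiv \alpha_i [D] + F_i$. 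Since $C \cdot F_i^\epsilon \geq 0$ by the observation above, passing to the limit yields $C \cdot F_i \geq 0$ as well.

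Next I would combine the two decompositions: adding them and using $D_1 + D_2 \equiv D$ gives the identity $F_1 + F_2 + (\alpha_1 + \alpha_2 - 1)[D] \equiv 0$. Pairing with $C$ produces $0 \leq C \cdot (F_1 + F_2) = (1 - \alpha_1 - \alpha_2)(C \cdot D)$, and since $C \cdot D < 0$ this forces $\alpha_1 + \alpha_2 \geq 1$. On the other hand, $\ov{\text{Eff}}^1(X)$ is a salient (strongly convex) cone, because any nonzero pseudoeffective class pairs positively with $A^{n-1}$, so an identity $[F_1] + [F_2] + (\alpha_1 + \alpha_2 - 1)[D] = 0$ in which every summand lies in $\ov{\text{Eff}}^1(X)$ forces each summand to vanish. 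Because $[D] \neq 0$, this yields $\alpha_1 + \alpha_2 = 1$ and $[F_1] = [F_2] = 0$, so $D_i \equiv \alpha_i [D]$, which is exactly the extremality of $[D]$.

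The main obstacle is the limiting argument in the second paragraph: one must ensure $\alpha_i^\epsilon$ stays bounded, that the $F_i^\epsilon$ do not degenerate into $D$ in the limit, and that the inequality $C \cdot F_i \geq 0$ is inherited after passing to the limit. All of these follow from the salience of $\ov{\text{Eff}}^1(X)$ together with elementary $A^{n-1}$-degree estimates, but they require the approximation via the ample class rather than just working with effective representatives directly, which is the only subtle point in an otherwise formal cone-positivity argument.
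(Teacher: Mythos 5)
Your argument is correct, and it is essentially the standard proof of this criterion; note that the paper itself gives no proof here but simply quotes the result from \cite[Lemma 4.1]{chen2014genusone}, whose argument has the same structure as yours (split off the multiple of $D$ from effective approximations of the $D_i$, use $C\cdot E\ge 0$ for effective $E$ not containing $D$, and let the sign of $C\cdot D$ together with the salience of $\overline{\text{Eff}}^1(X)$ force proportionality). The only point worth flagging is that your assertion that a nonzero pseudoeffective class pairs strictly positively with $A^{n-1}$ --- which you use both for the uniform bound on $\alpha_i^\epsilon$ (where it is only needed for the effective class $[D]$, so it is immediate) and for salience --- is a standard but not entirely trivial fact and should be given a reference rather than treated as obvious.
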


\begin{proof}[Proof of Proposition 4.2] 
Let $f: \P\ov{\calH}_g \ra \ov{\M}_g$. We will use the following strategy. We will find moving curves in each of the irreducible boundary divisors of $\ov{\M}_g$ which satisfy the condition of Lemma 4.3. Then let $C$ be such a moving curve and let $D$ be an irreducible boundary divisor in $\ov{\M}_g$. Given a point in $f^{-1}(C)$, we can find a curve $C'$ through it by taking the intersection of $g-1$ hyperplane classes $H_1, \dots, H_{g-1}$ in $f^{-1}(C)$. When the choice of these hyperplane classes is general, $C'$ is irreducible by Bertini's theorem and moreover $C'$ covers $C$. To see the latter statement, note that as a result of the irreducibility of $C'$ we just need to show that the image of $C'$ under $f$ is not a single point $p \in C$. Note that $f^{-1}(p)$ is a divisor in $f^{-1}(C)$ and so it must intersect $\cap_{i=1}^{g-1} H_i$ positively. Thus $C'$ covers $C$. Finally, since varying the hyperplane classes used to construct $C'$ will not change the numerical equivalence class of $C'$, we know that it is indeed a moving curve. Thus, $f^*D \cdot C' = D \cdot f_*C' < 0$ and we can conclude by Lemma 4.3 that $f^*D$ is extremal. All that remains is to find appropriate moving curves in each of the boundary divisors of $\ov{\M}_g$.

Let $X$ be the following curve in $\delta_0 \subset \ov{\M}_g$: take a genus $g-1$ curve $C$ and identify a fixed point $p$ of $C$ to a varying point $q$ of $C$. This is a moving curve in $\delta_0$ and \[X \cdot \delta = \deg(N_{\wt{\Delta}/S} \otimes N_{\wt{C \times p}/S}) = \Delta^2 - 1 = 3-2g\] where $S$ is the blow up of $C \times C$ at $(p, p)$ and $\wt{\Delta}$ and $\wt{C \times p}$ denote the proper transforms. Since $X \cdot \delta_1 = 1$, we also have $X \cdot \delta_0 = 2-2g < 0$.  

Now assume that $g \geq 3$ and let $X$ be the moving curve in $\delta_i \subset \ov{\M}_g$ given by attaching a general genus $i$ curve $C_1$ to a general genus $g-i$ curve $C_2$ and varying the point of attachment in  $C_2$. In the computation for test curve $C$ in the proof of Theorem 1.1 we explained that $X \cdot \delta_i = 2-2(g-i) < 0$. When $g=2$, we can choose our moving curve $X$ in $\delta_1$ to be the family given by attaching a pencil of plane cubics to a general genus 1 curve. In the computation for test curve $B$ in the proof of Theorem 1.1, we explained that $X \cdot \delta_1 = -1$. Thus, we have found all necessary moving curves.  

By the same argument we can also conclude that the boundary divisors span extremal rays in $\ov{\text{Eff}}^1(\P\ov{\mathcal{Q}}_g)$ as well.

\end{proof}

\bibliographystyle{alpha}
\bibliography{divisors}

\end{document}